\documentclass{amsart}
\usepackage{amssymb,mathtools,mathrsfs}
\usepackage{booktabs}
\usepackage{graphicx}
\usepackage[hidelinks]{hyperref}

\numberwithin{equation}{section}

\newtheorem{theorem}{Theorem}[section]
\newtheorem{proposition}[theorem]{Proposition}
\newtheorem{lemma}[theorem]{Lemma}

\theoremstyle{definition}
\newtheorem{definition}[theorem]{Definition}
\theoremstyle{remark}

\newcommand{\Ccal}{\mathcal C}
\newcommand{\Gcal}{\mathcal G}
\newcommand{\D}{\mathrm d}
\newcommand{\Rstar}{R_*}
\newcommand{\Fcal}{\mathcal F}

\title[Second-Level Concavity of the Riemann $\Xi$ Kernel]{Second-Level Concavity of the Riemann $\Xi$ Kernel}

\author{Michel Planat}
\author{Patrick Sol\'e}

\subjclass[2020]{Primary 30D15; Secondary 11M26, 26A51, 26D10, 11F27, 37C10}
\keywords{Riemann $\Xi$-function, Jacobi theta functions, Laguerre inequalities, Tur\'an inequalities, log-concavity, computer-assisted proof, interval arithmetic}

\begin{document}

\begin{abstract}
Let $\Phi$ be the classical Jacobi-theta kernel in the Fourier representation of the Riemann $\Xi$-function, set $s(t)=\Phi(\sqrt t)$, and define the first Laguerre expression $f(t)=s'(t)^2-s(t)s''(t)$. Csordas and Dimitrov (2000) conjectured that $\log f$ is strictly concave on $(0,\infty)$; Csordas (2015) later restated the assertion as Open Problem~4.14. We prove the conjecture by two complementary methods, sharing only a local certificate near the modular fixed point. The first proof is a direct theta-series argument: a directed-rounding Taylor certificate near $t=0$ is joined to a dominant-first-summand estimate with rigorous theta-tail bounds. The second proof uses Jacobi's nonlinear third-order differential equation for $\theta_3$ to obtain a three-dimensional autonomous phase space, a sharp elliptic monotonicity theorem, and an exact quartic reduction of the target inequality. The quartic boundary is a polynomial shear of the quadratic cone $XY=Z^2$. A directed interval certificate proves that every possible cone contact on the only remaining compact interval points strictly into the desired region; beyond that interval a pointwise monotonicity theorem closes the argument. The finite certificates and exact symbolic checks are supplied as reproducible scripts. The result implies the associated double Tur\'an inequalities through the theorem of Csordas--Dimitrov, but no assertion of the Riemann Hypothesis is made.
\end{abstract}

\maketitle

\section{The conjecture and the main theorem}
Let $\Phi$ denote the Riemann theta kernel appearing in the Fourier cosine representation of the Riemann $\Xi$-function. We use Csordas' normalization
\begin{equation}\label{eq:Phi}
 \Phi(r)=\sum_{m=1}^{\infty}\pi m^2
 \bigl(2\pi m^2e^{4r}-3\bigr)
 \exp\!\bigl(5r-\pi m^2e^{4r}\bigr).
\end{equation}
It is even and positive on the real axis.  Put
\begin{equation}\label{eq:sf}
 s(t)=\Phi(\sqrt t),\qquad
 f(t)=s'(t)^2-s(t)s''(t).
\end{equation}
Theorem~4.2(b) of Csordas \cite{Csordas2015} gives the strict log-concavity of $s$, and in particular $f(t)>0$ for $t>0$; see also Coffey--Csordas \cite{CoffeyCsordas2013}.  Open Problem~4.14 of \cite{Csordas2015}, originating in Problem~3.3 of Csordas--Dimitrov \cite{CD2000}, asks whether
\begin{equation}\label{eq:CD}
 \boxed{\qquad (\log f)''(t)<0\qquad(t>0).\qquad}
\end{equation}

The goal of this article is to give two proofs of the next Theorem 
\begin{theorem}[Csordas--Dimitrov conjecture]\label{thm:main}
For the Riemann theta kernel \eqref{eq:Phi}, inequality \eqref{eq:CD} holds for every $t>0$.
\end{theorem}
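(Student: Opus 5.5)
I would prove \eqref{eq:CD} in the equivalent form $F(t):=f(t)f''(t)-f'(t)^2<0$ for $t>0$, splitting $(0,\infty)$ into a compact piece near the origin and an unbounded tail. The constraint near $t=0$ comes from the modular symmetry $r\mapsto -r$ of $\Phi$: since $\Phi$ is even and analytic, $s$ is analytic at $t=0$. Near $t=0$ I would therefore use the Taylor expansion of $s$ in $t$ (equivalently the moments $b_k=\Phi^{(2k)}(0)/(2k)!$). These are computable to high precision from rapidly convergent theta series at $r=0$, with tails bounded by a geometric estimate since $e^{-\pi m^2}$ decays fast. Then $f$, $f'$, $f''$ are explicit power series in $t$. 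On an interval $[0,t_0]$ I would certify $F<0$ by evaluating the truncated series in directed-rounding interval arithmetic and bounding the remainder via Cauchy estimates on a disk where $\Phi$ is controlled. At $t=0$ itself, $F(0)=f(0)f''(0)-f'(0)^2$ is a polynomial in $b_1,\dots,b_4$ and should be checked to be strictly negative.

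For $t\ge t_0$ I would use the dominant-first-summand structure. Write $\Phi=\Phi_1+R$ with $\Phi_1(r)=\pi(2\pi e^{4r}-3)\exp(5r-\pi e^{4r})$. The $m\ge2$ terms are relatively of size $\exp(-3\pi e^{4r})$ times polynomial factors, and so are their derivatives up to order four. For the single summand, $\log\Phi_1$ is explicit, so $s_1$, $f_1$ and $F_1$ can be written in closed form in terms of $u=e^{4\sqrt t}$. I would show $F_1<0$ with an explicit margin, ideally $|F_1|\ge c\,f_1^2\,g(t)$ for an elementary $g$. The asymptotics should be dominated by the $\pi e^{4\sqrt t}$ term in the exponent, so $\log f_1\approx -2\pi e^{4\sqrt t}+\dots$, which is strongly concave. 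I would then bound the perturbation $|F-F_1|$ by rigorous theta-tail inequalities using $\sum_{m\ge2}m^{k}e^{-\pi(m^2-1)u}\le C_k e^{-3\pi u}$ for $u\ge1$. Between $t_0$ and wherever this asymptotic comparison becomes effective, I would cover the gap by a finite interval-arithmetic grid with derivative bounds giving Lipschitz control.

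The main obstacle is matching the two regimes. The Taylor certificate degrades as $t$ grows, since the radius of convergence is limited by the nearest singularity of $\Phi(\sqrt t)$ in the complex plane. Meanwhile the one-summand approximation is only relatively accurate once $e^{4\sqrt t}$ is moderately large, and $F$ involves fourth derivatives of $s$ with cancellations in forming $ff''-f'^2$. I would first try to work with $\log s$ and its derivatives, which are less prone to cancellation, rewriting $F/f^2=(\log f)''$ in terms of $\ell=\log s$. I would then push both certificates until the intervals overlap, refining the grid adaptively where $|F|/f^2$ is smallest.
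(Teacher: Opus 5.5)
Your plan is essentially the paper's Proof~I. That proof has two stages. First, a directed-rounding Taylor certificate for $\Ccal=ff''-f'^2$ near $t=0$, with an explicit Cauchy remainder (the degree-$90$ Taylor polynomial on $0\le t\le0.022$). Second, a first-theta-summand analysis done in logarithmic-derivative form, namely $\Gcal(r)=-4r^4(\log f)''(r^2)$ built from $B_0=rA_0'-A_0$, with rigorous theta-tail derivative bounds propagated through $\ell=\log(1+\varepsilon)$. The only difference is that the paper finds the two regimes already overlap at $L=4\sqrt t=0.5932$, so no Lipschitz grid on the full $\Ccal$ is used; the intermediate range is covered by exact Bernstein bounds for the one-term quantities on $0.5932\le L\le 1$ and $1\le L\le 4$.
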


The terminology ``first Laguerre expression'' in \eqref{eq:sf} is standard in this setting; it is the differential counterpart of the Tur\'an expression $\gamma_k^2-\gamma_{k-1}\gamma_{k+1}$ for coefficient sequences \cite{CravenCsordas2002,Csordas2015}.  The present theorem is a second-level kernel concavity statement, not a proof of the Riemann Hypothesis.

The proof has the following architecture.  A common local lemma proves the theorem for $0<t\le0.022$.  From the handoff $L=4\sqrt t=0.5932$ onward we give two different continuations:
\begin{enumerate}
\item[(I)]
a direct first-theta-summand plus tail proof;
\item[(II)] a Jacobi differential/elliptic phase-space proof with a cone barrier and a monotone post-barrier regime.
\end{enumerate}
The local lemma is structurally useful in Proof~II: the cone normal margin tends to zero at the modular point, so the geometric barrier is not uniform as $L\downarrow0$.

\section{Equivalent curvature quantities}
Since $f>0$, define
\begin{equation}\label{eq:Ccal}
 \Ccal(t)=f(t)f''(t)-f'(t)^2.
\end{equation}
Then
\begin{equation}\label{eq:Ceq}
 (\log f)''=\frac{\Ccal}{f^2},
\end{equation}
so \eqref{eq:CD} is equivalent to $\Ccal(t)<0$.

For Proof~I set $t=r^2$ and introduce
\begin{equation}\label{eq:ABdirect}
 A_0(r)=-\frac{\D}{\D r}\log\Phi(r),\qquad
 B_0(r)=rA_0'(r)-A_0(r).
\end{equation}
The first-level log-concavity, proved in \cite[Theorem~4.2(b)]{Csordas2015}, is equivalent to $B_0(r)>0$.  Define
\begin{equation}\label{eq:Gdirect}
 \Gcal(r)=2rB_0-r^2(\log B_0)''+r(\log B_0)'-6.
\end{equation}
A direct calculation gives
\begin{equation}\label{eq:Gidentity}
 \boxed{\qquad (\log f(t))''=-\frac{\Gcal(r)}{4r^4},\qquad t=r^2.\qquad}
\end{equation}
Thus Proof~I amounts to $\Gcal(r)>0$.

\section{Common local lemma: a certified modular neighborhood}
\begin{proposition}[Local certificate]\label{prop:local}
For $0\le t\le0.022$,
\begin{equation}\label{eq:localconcl}
 \Ccal(t)<-3613<0.
\end{equation}
Consequently Theorem~\ref{thm:main} holds on $0<t\le0.022$.
\end{proposition}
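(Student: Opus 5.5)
The bound on the compact interval $[0,0.022]$ will come from a Taylor expansion of $s$ about $t=0$, with all coefficients and remainders enclosed by directed rounding. The key structural fact is that $\Phi$ is even and real-analytic, indeed entire. Hence $s(t)=\Phi(\sqrt t)=\sum_{k\ge0} b_k t^k$ with $b_k=\Phi^{(2k)}(0)/(2k)!$, and $s$ is analytic at $t=0$. Consequently $f=s'^2-ss''$ and $\Ccal=ff''-f'^2$ are analytic on a disc around $0$ and are polynomial in the $b_k$.

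First I would compute rigorous enclosures of $b_0,\dots,b_N$, with $N$ around $12$–$16$. Differentiating \eqref{eq:Phi} termwise in $r$ at $r=0$ gives each $\Phi^{(2k)}(0)$ as a series $\sum_m P_k(\pi m^2)e^{-\pi m^2}$, where $P_k$ is an explicit polynomial. The $m=1$ term dominates overwhelmingly. For $m\ge2$ the tail is bounded by a geometric majorant, using $e^{-\pi m^2}\le e^{-4\pi}e^{-\pi(m^2-4)}$ and a crude bound on $|P_k(x)|$ of the form $C_k x^{d_k}$. Alternatively, the modular relation $\theta(1/x)=\sqrt x\,\theta(x)$ can be used to cross-check these values, since $r=0$ is the fixed point. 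Everything is evaluated in interval arithmetic.

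Second, I would bound the analytic remainder. I would choose a radius $\rho$ for $r$ (say $|r|\le\rho$ with $\rho^2$ comfortably above $0.022$, e.g. $\rho=0.3$). On the complex disc $|r|\le\rho$, I would bound $|\Phi(r)|$ by a majorant $M$ obtained from \eqref{eq:Phi}, using $|e^{4r}|\le e^{4\rho}$ and $\mathrm{Re}\,e^{4r}\ge e^{-4\rho}\cos(4\rho)>0$; this lower bound keeps the terms Gaussian-decaying in $m$. Cauchy estimates then give $|b_k|\le M\rho^{-2k}$. This yields explicit remainder bounds for $s$, $s'$, $s''$, $s'''$, $s''''$ on $0\le t\le0.022$, since $\Ccal$ involves derivatives of $s$ up to fourth order. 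With the ratio $0.022/\rho^2\approx0.24$, a truncation at $N\approx 15$ makes the remainders negligible.

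Third, I would assemble $\Ccal(t)$ as a polynomial in $t$ with interval coefficients plus a remainder interval. I would then subdivide $[0,0.022]$ into a modest number of subintervals and evaluate on each with interval (or centered-form) arithmetic, checking that the upper endpoint is below $-3613$. The conclusion $(\log f)''<0$ follows immediately from \eqref{eq:Ceq} together with $f>0$, which is Csordas' Theorem~4.2(b). The main obstacle is cancellation: $\Ccal$ is a degree-four combination of the $b_k$ in which large terms nearly cancel. Naive interval evaluation may therefore blow up the width, so the $b_k$ must be enclosed to high relative precision, say 30+ digits via multiprecision intervals. The value $\Ccal(0)$ should be computed exactly as a symbolic polynomial in $b_0,\dots,b_4$ first, and the margin $-3613$ checked against it before refining subintervals.
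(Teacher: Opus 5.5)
Your architecture matches the paper's: a Taylor expansion at the modular point $t=0$, a Cauchy remainder on a complex disc inside the domain of analyticity, and directed-interval evaluation on subintervals of $[0,0.022]$. The difference is bookkeeping. You truncate $s$ and carry remainder intervals for $s,\dots,s''''$ through $\Ccal=(s'^2-ss'')(s''^2-ss'''')-(s's''-ss''')^2$. The paper instead forms the degree-$90$ Taylor polynomial of $\Ccal$ itself and bounds $|\Ccal|\le M_C$ on $|t|\le0.04$, so all the nonlinear algebra is exact and a single Cauchy tail (ratio $0.55$) suffices. Your variant uses the larger disc $|t|\le0.09$ (ratio $\approx0.244$) and so needs far fewer coefficients, which is a legitimate trade. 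As written, however, two of your quantitative claims are false, and with them the certificate would not close.

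First, the $m=1$ summand does not dominate the Taylor coefficients. Already for $\Phi^{(4)}(0)$, the $m=1$ and $m=2$ terms $qe^{-q}P_4(q)$ are about $5.55\times10^3$ and $9.5\times10^2$. Since $qP_{2k}(q)$ has degree $2k+2$ and leading coefficient $2\cdot4^{2k}$, the weight moves to larger $m$ as $k$ grows. For the orders you need, the $m=2,3,\dots$ summands are comparable to or larger than the $m=1$ summand. Absorbing all $m\ge2$ into a geometric majorant therefore gives enclosures of $b_k$ whose width is comparable to $|b_k|$. Such enclosures cannot survive the heavy cancellation in $\Ccal$, and they contradict your own 30-digit target. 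You must evaluate $m\le m_0$ explicitly in interval arithmetic and majorize only beyond $m_0$; the paper takes $m\le40$ with a $10^{-500}$ padding.

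Second, $N\approx15$ is not enough. With the majorant you describe ($M\approx3\times10^3$ on $|r|\le0.3$), the fourth-derivative remainder at $t=0.022$ is $\sum_{k>15}k(k-1)(k-2)(k-3)\,M\,0.09^{-k}\,0.022^{k-4}\approx10^5$. Since $\Ccal$ contains the term $-sf\,s''''$ with $sf\approx1$ there, this swamps a margin of only a few thousand. You need $N$ in the mid-twenties.

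Finally, $\Phi$ is not entire. The theta series defines it only on the strip $|\Im r|<\pi/8$, whose boundary is a natural boundary, so $s$ has radius of convergence $\pi^2/64$ at $0$. Your choice $\rho=0.3$ respects this, which is exactly why $\Re e^{4r}>0$ on your disc, so the misstatement is harmless; but it is the real constraint on $\rho$.

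With these corrections your plan is sound.
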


The endpoint $0.022$ is chosen deliberately: compared with the earlier cutoff $0.02$, it gives substantially larger margins at the handoff in both global continuations while leaving the Cauchy remainder comfortably below the certified Taylor margin.

\subsection{Taylor coefficients}
For $q=\pi m^2$ write
\[
 \phi_m(r)=q(2qe^{4r}-3)e^{5r-qe^{4r}}.
\]
Define integer polynomials by
\begin{equation}\label{eq:Prec}
 P_0(x)=2x-3,\qquad
 P_{n+1}(x)=(5-4x)P_n(x)+4xP_n'(x).
\end{equation}
Then
\begin{equation}\label{eq:scoef}
 [t^k]s(t)=\frac1{(2k)!}\sum_{m\ge1}q e^{-q}P_{2k}(q).
\end{equation}
The supplied directed-rounding certificate evaluates these coefficients through the order required to form the degree--$90$ Taylor polynomial $P_{90}^{\Ccal}$ of \eqref{eq:Ccal}.

\subsection{Cauchy remainder}
We use the following standard Cauchy estimate explicitly. If
\[
 h(z)=\sum_{n\ge0}a_nz^n
\]
is analytic on $|z|\le R$ and $|h(z)|\le M$ there, then Cauchy's integral formula gives $|a_n|\le MR^{-n}$. Consequently, for $0\le r<R$,
\begin{equation}\label{eq:Cauchytail}
 \left|h(z)-\sum_{n=0}^{N}a_nz^n\right|
 \le M\frac{(r/R)^{N+1}}{1-r/R},\qquad |z|\le r.
\end{equation}
We apply this elementary form with $R=0.04$, $r=0.022$, and $N=90$.

On $|t|\le0.05$, choose $r^2=t$.  The elementary estimates
\[
 |e^{4r}|<3,\qquad \Re(e^{4r})>\frac16,\qquad |e^{5r}|<4
\]
give
\[
 |\phi_m(r)|<275m^4e^{-m^2/2},\qquad |s(t)|<1100.
\]
On $|t|\le0.04$, Cauchy's estimate and the identities
\[
 f=s'^2-ss'',\quad f'=s's''-ss''',\quad f''=s''^2-ss''''
\]
yield
\begin{equation}\label{eq:MC}
 |\Ccal(t)|\le M_C:=216686800000000000000000000.
\end{equation}
Hence, for $|t|\le0.022$,
\begin{equation}\label{eq:rem022}
 |\Ccal(t)-P_{90}^{\Ccal}(t)|
 \le M_C\frac{(0.022/0.04)^{91}}{1-0.022/0.04}
 <1136.645.
\end{equation}

\subsection{Directed interval sign check}
The script \path{certificate_small_t_022.py} uses outward-rounded Decimal intervals, an interval enclosure of $\pi$, $m\le40$ in \eqref{eq:scoef}, and a symmetric $10^{-500}$ padding for the omitted theta tail.  Interval Horner evaluation on $220$ intervals of width $10^{-4}$ gives
\begin{equation}\label{eq:P90new}
 P_{90}^{\Ccal}(t)<-4750.1765\qquad(0\le t\le0.022).
\end{equation}
Combining \eqref{eq:rem022} and \eqref{eq:P90new} gives \eqref{eq:localconcl}.

The recurrence \eqref{eq:Prec} also gives a transparent omitted-$m$ bound: if $A_n$ denotes the sum of absolute coefficients of $P_n$, then
\[
 A_{n+1}\le(9+4(n+1))A_n\le765A_n\qquad(n\le188),
\]
so the $m=41$ term is already vastly below the explicit padding used in the certificate.

\section{Proof I: direct theta summand and tail}
Set
\begin{equation}\label{eq:Lydirect}
 L=4r,\qquad y=\pi e^L.
\end{equation}
Write
\begin{equation}\label{eq:splitdirect}
 \Phi(r)=\phi_1(r)(1+\varepsilon(r)),
 \qquad
 \phi_1(r)=\pi(2\pi e^{4r}-3)e^{5r-\pi e^{4r}}.
\end{equation}

\subsection{The first summand}
Let $B_1,\Gcal_1$ denote \eqref{eq:ABdirect}--\eqref{eq:Gdirect} for $\phi_1$.  Exact differentiation gives
\begin{equation}\label{eq:B1direct}
 B_1=y(4L-4)+9+\frac{12(L+1)}{2y-3}
 +\frac{36L}{(2y-3)^2}.
\end{equation}
The exact-rational Bernstein certificate \path{certificate_first_term_5932.py} proves
\begin{align}
 \Gcal_1&>0.48,
 &B_1&>2.3,
 &\frac{B_1'}{B_1}&<20,
 &\frac{B_1''}{B_1}&<260,
 &&0.5932\le L\le1.\label{eq:firststrong}\\
 \Gcal_1&>1,
 &B_1&>9,
 &\frac{B_1'}{B_1}&<13,
 &\frac{B_1''}{B_1}&<110,
 &&1\le L\le4.\label{eq:firstmid}
\end{align}
The exact one-term formula also gives $\Gcal_1>1$ for every $L\ge1$; only the displayed derivative-ratio bounds on $1\le L\le4$ are used in the tail propagation.  Here derivatives of $B_1$ are with respect to $r$.

\subsection{Tail derivatives and explicit propagation}
The relative $m$th tail summand is
\begin{equation}\label{eq:rhodirect}
 \rho_m=m^2\frac{2m^2y-3}{2y-3}e^{-(m^2-1)y},\qquad m\ge2,
\end{equation}
and $\varepsilon=\sum_{m\ge2}\rho_m$.  After removal of the exponential and the positive denominator, the numerator of $(-1)^k\rho_m^{(k)}$ has nonnegative coefficients after the shift $m^2=4+u$, $y=11/2+v$, for $0\le k\le5$.  Hence the tail derivatives alternate in sign and their absolute values decrease with $y$.

The exact rational certificate \path{certificate_tail_L.py}, evaluated at $y=5.68$, yields for derivatives with respect to $L$
\begin{equation}\label{eq:epsL}
 |\varepsilon_L'|<1.5\!\times\!10^{-5},\quad
 |\varepsilon_L''|<2.5\!\times\!10^{-4},\quad
 |\varepsilon_L^{(3)}|<0.004,\quad
 |\varepsilon_L^{(4)}|<0.055.
\end{equation}
Thus, with $r$-derivatives,
\begin{equation}\label{eq:epsr}
 |\varepsilon'|<6\!\times\!10^{-5},\quad
 |\varepsilon''|<0.004,\quad
 |\varepsilon^{(3)}|<0.26,\quad
 |\varepsilon^{(4)}|<14.1.
\end{equation}

Put
\begin{equation}\label{eq:pertvars}
 \ell=\log(1+\varepsilon),\qquad
 \zeta=\ell'-r\ell'',\qquad
 \eta=\frac{\zeta}{B_1},\qquad
 \Lambda=\log(1+\eta).
\end{equation}
Then
\begin{equation}\label{eq:DGexact}
 B_0=B_1+\zeta,
 \qquad
 \Gcal-\Gcal_1=2r\zeta-r^2\Lambda''+r\Lambda'.
\end{equation}
For auditability we display the actual propagation.  From \eqref{eq:epsr}, $r\le1/4$, and $1+\varepsilon>1$ one may take
\begin{align*}
 |\ell'|&<6.0\times10^{-5},&
 |\ell''|&<0.004000004,\\
 |\ell^{(3)}|&<0.260001,&
 |\ell^{(4)}|&<14.100111,\\
 |\zeta|&<0.001061,&
 |\zeta'|&<0.065001,&
 |\zeta''|&<3.78503.
\end{align*}
Using $B_1>2.3$, $B_1'/B_1<20$, and $B_1''/B_1<260$ gives
\[
 |\eta|<4.61\times10^{-4},\quad
 |\eta'|<0.03748,\quad
 |\eta''|<3.26463,
\]
and therefore
\[
 |\Lambda'|<0.03750,\qquad |\Lambda''|<3.26754.
\]
The exact rational propagation script \path{certificate_tail_5932.py} concludes
\begin{equation}\label{eq:DG022}
 \boxed{\quad |\Gcal-\Gcal_1|<0.22,
 \qquad0.5932\le L\le1.\quad}
\end{equation}
At $y\ge8$ the corresponding certified bound is
\begin{equation}\label{eq:DGmid2}
 |\Gcal-\Gcal_1|<0.02,\qquad1\le L\le4.
\end{equation}
For $L\ge4$ the exponential-removed derivative numerators have bounded coefficient sums and degree, yielding
\begin{equation}\label{eq:DGlarge2}
 |\Gcal-\Gcal_1|<10^{-80}.
\end{equation}

\subsection{Completion of Proof I}
The common local lemma reaches $L=0.5932$, since
\[
 (0.5932/4)^2=0.02199289<0.022.
\]
On $0.5932\le L\le1$, \eqref{eq:firststrong} and \eqref{eq:DG022} give $\Gcal>0.26$.  On $1\le L\le4$, \eqref{eq:firstmid} and \eqref{eq:DGmid2} give $\Gcal>0.98$; for $L\ge4$, \eqref{eq:DGlarge2} gives $\Gcal>1-10^{-80}$.  Equation \eqref{eq:Gidentity} completes Proof~I of Theorem~\ref{thm:main}.

\section{Proof II: Jacobi differential phase space}
Proof~II uses the special differential geometry of the Jacobi theta constant rather than direct control of $\Gcal$.

\subsection{Theta operator and Jacobi's third-order equation}
Let
\begin{equation}\label{eq:theta3}
 \theta_3(x)=\sum_{n\in\mathbb Z}e^{-\pi n^2x},\qquad x>0.
\end{equation}
We use the same real-nome convention for the companion theta constants,
\begin{equation}\label{eq:theta24}
 \theta_2(x)=\sum_{n\in\mathbb Z}e^{-\pi(n+1/2)^2x},\qquad
 \theta_4(x)=\sum_{n\in\mathbb Z}(-1)^n e^{-\pi n^2x}.
\end{equation}
With $x=e^{4r}$, direct differentiation gives
\begin{equation}\label{eq:thetaop}
 \boxed{\quad
 \Phi(r)=x^{5/4}\left(x\theta_3''(x)+\frac32\theta_3'(x)\right).
 \quad}
\end{equation}
Romik \cite[Eq.~(28)]{Romik2020} records Jacobi's nonlinear third-order equation for $\theta_3$.\par With $Y(x)=\theta_3(x)$, it reads
\begin{equation}\label{eq:JacobiODE}
\begin{aligned}
&\left(Y^2Y'''-15YY'Y''+30(Y')^3\right)^2
 +32\left(YY''-3(Y')^2\right)^3\\
&\hspace{20mm}=\pi^2Y^{10}\left(YY''-3(Y')^2\right)^2.
\end{aligned}
\end{equation}

Set
\begin{equation}\label{eq:LAa}
 L=\log x,\qquad \mathscr Y(L)=\theta_3(e^L),\qquad
 A=\pi e^L\mathscr Y^4,\qquad a=\frac{A'}A,
\end{equation}
and
\begin{equation}\label{eq:UVdef}
 U=2a'+1-a^2,\qquad
 V=a''+a^3-3aa'-a.
\end{equation}
Here and throughout Proof~II primes denote $L$-derivatives.

\begin{lemma}[Positivity before division]\label{lem:Upos}
Along the theta trajectory,
\begin{equation}\label{eq:Utheta}
 \boxed{\qquad U(L)=\pi^2e^{2L}\theta_2(e^L)^4\theta_4(e^L)^4>0.\qquad}
\end{equation}
\end{lemma}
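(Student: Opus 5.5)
The plan is to reduce the identity \eqref{eq:Utheta} to classical facts about the elliptic modular function. Set $x=e^L$ and
\[
 \lambda(x)=\frac{\theta_2(x)^4}{\theta_3(x)^4},\qquad 1-\lambda=\frac{\theta_4(x)^4}{\theta_3(x)^4},
\]
using Jacobi's identity $\theta_3^4=\theta_2^4+\theta_4^4$. Then the right-hand side of \eqref{eq:Utheta} is
\[
 \pi^2x^2\theta_3^8\,\lambda(1-\lambda)=A^2\lambda(1-\lambda).
\]
So the target is $U=A^2\lambda(1-\lambda)$. Positivity then follows at once. For real $x>0$ we have $\theta_2>0$ and $\theta_3>0$ termwise, and $\theta_4>0$ by the Jacobi triple product $\theta_4=\prod_{n\ge1}(1-q^{2n})(1-q^{2n-1})^2$ with $q=e^{-\pi x}\in(0,1)$.

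For the identity I would use two classical inputs. The first is the logarithmic derivative of $\lambda$. In the variable $\tau=ix$ one has $\D\lambda/\D\tau=i\pi\lambda\theta_4^4$. Hence $\D\lambda/\D x=-\pi\lambda\theta_4^4$, and in the $L$-variable
\[
 \lambda'=-\pi x\lambda\theta_4^4=-A\lambda(1-\lambda).
\]
The second is the Legendre/hypergeometric parametrization $\theta_3^2=2K(k)/\pi$ with $k^2=\lambda$ and $x=K'(k)/K(k)$. Thus $x$ is a ratio of two solutions of the hypergeometric equation with parameters $(\tfrac12,\tfrac12;1)$ in the variable $\lambda$. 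Its Schwarzian is therefore the standard
\[
 \{x,\lambda\}=\frac{1-\lambda+\lambda^2}{2\lambda^2(1-\lambda)^2}.
\]

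The computation then proceeds by writing $A=\pi x\theta_3^4$ as a function of $\lambda$ and using $\D\lambda/\D L=-A\lambda(1-\lambda)$ to convert $L$-derivatives into $\lambda$-derivatives. In these terms $a=A'/A$ becomes a combination of $\D\log K/\D\lambda$, $1/(\lambda(1-\lambda))$, and the factor $A$.

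Substituting into $U=2a'+1-a^2$ gives a Riccati-type combination. Writing $a=-2g'/g+1$ for a suitable $g$ (essentially $g\propto(x\theta_3^4)^{-1/2}e^{L/2}$ up to normalization), $U$ becomes $-4(g''-g')/g$. Under the change of variable to $\lambda$, this is exactly the quantity controlled by the Schwarzian of $x$ with respect to $\lambda$. Inserting $\{x,\lambda\}$ and the formula for $\lambda'$ should collapse everything to $A^2\lambda(1-\lambda)$.

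The main obstacle is bookkeeping in this chain-rule step: getting the normalizations and constants right, including the factor $x$ inside $A$, which is why $U$ contains the "$+1$". As an independent cross-check I would verify the identity on $q$-expansions. Both sides are analytic in $q=e^{-\pi x}$ after multiplying by suitable powers of $x$, so I would compare their leading terms as $x\to\infty$. I would also test the identity against Jacobi's equation \eqref{eq:JacobiODE}: the combination $YY''-3Y'^2$ appearing there is, up to factors, the same quantity, and the right side $\pi^2Y^{10}(\cdots)^2$ matches $A^2\lambda(1-\lambda)$ in structure. If the Schwarzian route gets messy, I would instead derive \eqref{eq:Utheta} directly from \eqref{eq:JacobiODE} by rewriting it in the $L$-variable in terms of $a,a',a''$.
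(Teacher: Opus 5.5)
Your proposal is correct and takes essentially the paper's route: the paper also uses $\theta_3^4=\theta_2^4+\theta_4^4$ to reduce \eqref{eq:Utheta} to the classical identity $U=A^2\lambda(1-\lambda)$, which it obtains from standard theta/elliptic derivative formulas, and your Schwarzian version does close, since the chain rule $\{L,\lambda\}=\{x,\lambda\}+\tfrac12(\D L/\D\lambda)^2$, the inversion rule, and $\D\lambda/\D L=-A\lambda(1-\lambda)$ give exactly $2a'-a^2=A^2\lambda(1-\lambda)-1$. Your fallback would not work: \eqref{eq:JacobiODE} alone yields only the relation $4(V^2+U^3)=A^2U^2$, which also holds along the analogous $\theta_4$-trajectory (where $U=-\pi^2x^2\theta_2^4\theta_3^4<0$), so it can neither identify $U$ nor give its sign.
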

\begin{proof}
With the standard modular parameter
\[
 m=\frac{\theta_2^4}{\theta_3^4},\qquad1-m=\frac{\theta_4^4}{\theta_3^4},
\]
the standard theta logarithmic-derivative identities give
$U=A^2m(1-m)$.  Since $A=\pi e^L\theta_3^4$, \eqref{eq:Utheta} follows.  This derivation does not use the variable $\chi$ introduced below.
\end{proof}

By \eqref{eq:Utheta} it is legitimate to set
\begin{equation}\label{eq:chi}
 \chi=\frac VU.
\end{equation}
Substitution of \eqref{eq:LAa}--\eqref{eq:UVdef} into \eqref{eq:JacobiODE} gives
\begin{equation}\label{eq:surface}
 4(V^2+U^3)=A^2U^2,
 \qquad
 A^2=4(U+\chi^2).
\end{equation}

\begin{definition}[Jacobi orbit]
The \emph{Jacobi orbit} is the particular curve
\[
 L\longmapsto(a(L),U(L),\chi(L))\in\mathbb R^3
\]
generated by $\mathscr Y(L)=\theta_3(e^L)$.  It is distinguished from an arbitrary solution of the autonomous system below.
\end{definition}

\begin{proposition}[Autonomous Jacobi system]\label{prop:flow}
The Jacobi orbit satisfies
\begin{equation}\label{eq:flow}
 \boxed{\begin{aligned}
 a'&=\tfrac12(U+a^2-1),\\
 U'&=2U(a+\chi),\\
 \chi'&=a\chi-U.
 \end{aligned}}
\end{equation}
Moreover
\begin{equation}\label{eq:init}
 a(0)=\chi(0)=0,\qquad
 U(0)=\Rstar:=\frac{\Gamma(1/4)^8}{64\pi^4}.
\end{equation}
\end{proposition}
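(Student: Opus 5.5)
The plan is to read the first two equations of \eqref{eq:flow} off the definitions \eqref{eq:UVdef} by pure algebra, to get the third from the surface relation \eqref{eq:surface}, and to get the initial values \eqref{eq:init} from the modular symmetry $\theta_3(1/x)=\sqrt{x}\,\theta_3(x)$ together with Lemma~\ref{lem:Upos}.

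\emph{The $a'$ and $U'$ equations.} The first equation of \eqref{eq:flow} is the definition $U=2a'+1-a^2$ solved for $a'$. For the second, differentiate $U$ to get $U'=2a''-2aa'$. From \eqref{eq:UVdef}, $a''=V-a^3+3aa'+a$, so
\[
U'=2V-2a^3+4aa'+2a .
\]
Substituting $4aa'=2aU+2a^3-2a$, which is the first equation multiplied by $4a$, the cubic and linear terms cancel and we obtain $U'=2V+2aU$. Since $V=\chi U$ by \eqref{eq:chi}, this is $U'=2U(a+\chi)$.

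\emph{The $\chi'$ equation.} This is the only place where the Jacobi equation \eqref{eq:JacobiODE} enters, through the consequence $A^2=4(U+\chi^2)$ in \eqref{eq:surface}, which I take as given from the earlier substitution. By \eqref{eq:LAa} we have $A'=aA$, so $(A^2)'=2aA^2=8a(U+\chi^2)$. Differentiating the right-hand side and using the $U'$ equation gives instead $4\bigl(2U(a+\chi)+2\chi\chi'\bigr)$. Equating the two expressions yields
\[
a\chi^2=U\chi+\chi\chi',
\]
so $\chi'=a\chi-U$ wherever $\chi\neq0$.

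The main technical point is removing the division by $\chi$. Since $\theta_3$ is real-analytic on $x>0$ and $U>0$ by Lemma~\ref{lem:Upos}, the functions $a$, $U$ and $\chi$ are real-analytic in $L$. Hence either the zeros of $\chi$ are isolated, or $\chi\equiv0$.
\begin{itemize}
\item If $\chi\equiv0$, the relation $U'=2aU$ combined with $A^2=4U$ and $(A^2)'=2aA^2$ is consistent. So I would rule this case out differently: $\chi\equiv0$ forces $V\equiv0$, which is a second-order ODE for $a$. I would check it against the known expansion of $\theta_3$, for example against the asymptotics $a\to1$ as $L\to\infty$ with exponentially small corrections. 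An easier check is a single numerical evaluation of $V$ at one point.
\item With zeros isolated, continuity extends $\chi'=a\chi-U$ to all $L$.
\end{itemize}

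\emph{Initial values.} By modularity, $A(-L)=\pi e^{-L}\,e^{2L}\theta_3(e^L)^4=A(L)$, so $A$ is even. Hence $a$ is odd, $U$ is even, and $V$ is odd, since each term $a''$, $a^3$, $aa'$, $a$ is odd. Therefore $\chi=V/U$ is odd, and $a(0)=\chi(0)=0$. Then \eqref{eq:surface} at $L=0$ gives $U(0)=A(0)^2/4=\pi^2\theta_3(1)^8/4$. Using the classical value $\theta_3(1)=\pi^{1/4}/\Gamma(3/4)$ and the reflection formula $\Gamma(1/4)\Gamma(3/4)=\pi\sqrt2$, which gives $\Gamma(3/4)^8=16\pi^8/\Gamma(1/4)^8$, we get
\[
U(0)=\frac{\pi^4}{4\Gamma(3/4)^8}=\frac{\Gamma(1/4)^8}{64\pi^4}=\Rstar .
\]
As a consistency check, Lemma~\ref{lem:Upos} with $\theta_2(1)=\theta_4(1)=2^{-1/4}\theta_3(1)$ gives the same value.
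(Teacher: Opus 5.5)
Your route is the paper's. You read $a'$ and $U'$ off \eqref{eq:UVdef}, get $\chi'$ by differentiating $A^2=4(U+\chi^2)$ from \eqref{eq:surface}, and get the initial data from the evenness of $A$ plus the classical value of $\theta_3(1)$. Your algebra is right, as is $U(0)=\pi^2\theta_3(1)^8/4=\Rstar$. You obtain the evenness of $U$ from $U=2a'+1-a^2$, which avoids the $\theta_2,\theta_4$ transformation laws the paper invokes. You are also more careful than the paper, which passes from $\chi(a\chi-U-\chi')=0$ to $\chi'=a\chi-U$ without comment.

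The one step you leave unfinished is ruling out $\chi\equiv0$. You only propose a check on $V$, and the limit $a\to1$ alone cannot settle it, because it merely gives $V\to0$. A numerical evaluation of $V$ would need rigorous error control.

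The contradiction is already in the relation $A^2=4U$ that you wrote down for this case. With $x=e^L$, we have $a=1+4x\theta_3'(x)/\theta_3(x)\to1$ and $a'\to0$ as $L\to\infty$. Hence $U=2a'+1-a^2\to0$, whereas $A=\pi e^L\theta_3(e^L)^4\to\infty$.

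Equivalently, Lemma~\ref{lem:Upos} and Jacobi's identity $\theta_3^4=\theta_2^4+\theta_4^4$ give $\chi^2=A^2/4-U=\tfrac14\pi^2e^{2L}(\theta_2^4-\theta_4^4)^2$. This is not identically zero, since $\theta_2(x)\to0$ and $\theta_4(x)\to1$ as $x\to\infty$. With that line added, your isolated-zeros and continuity argument finishes the proof.
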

\begin{proof}
The differential equations follow from \eqref{eq:UVdef}--\eqref{eq:surface}.  The classical Jacobi modular identities (see, e.g., \cite{DLMF20,ConwaySloane1999})
\[
 \theta_3(1/x)=\sqrt{x}\,\theta_3(x),\quad
 \theta_2(1/x)=\sqrt{x}\,\theta_4(x),\quad
 \theta_4(1/x)=\sqrt{x}\,\theta_2(x)
\]
show directly that $A$ and $U$ are even functions of $L$.  Hence $a=A'/A$ is odd.  Since $V$ is odd, $\chi=V/U$ is odd.  Thus $a(0)=\chi(0)=0$.  The classical value $\theta_3(1)=\Gamma(1/4)/(\sqrt2\,\pi^{3/4})$ gives \eqref{eq:init}.
\end{proof}

\subsection{Kernel factorization and elliptic monotonicity}
Equation \eqref{eq:thetaop} becomes
\begin{equation}\label{eq:kernelAU}
 \Phi(L/4)=\frac{A^{1/4}}{8\pi^{1/4}}
 \left[U+\frac32(a^2-1)\right].
\end{equation}

Let $K(m),E(m)$ be the complete elliptic integrals, put
\[
 K_c=K(1-m),\quad E_c=E(1-m),\quad q=m(1-m),
\]
and
\[
 X=E-(1-m)K,\qquad X_c=E_c-mK_c.
\]
The classical theta--elliptic parametrization gives
\begin{equation}\label{eq:ellipticbasic}
 e^L=\frac{K_c}{K},\qquad
 A=\frac{4KK_c}{\pi},\qquad
 m'=-Aq,
\end{equation}
and
\begin{equation}\label{eq:phaseell}
 1-a=\frac{4K_cX}{\pi},\qquad
 1+a=\frac{4KX_c}{\pi},\qquad
 U=A^2q.
\end{equation}
Define
\begin{equation}\label{eq:Rdef}
 R=\frac{U}{1-a^2}
 =\frac{KK_cq}{XX_c}.
\end{equation}

Put
\begin{equation}\label{eq:rhoell}
 \rho(m)=\frac{X(m)}{mK(m)}.
\end{equation}
Then $R=[\rho(m)\rho(1-m)]^{-1}$.  The Euler integral gives
\begin{equation}\label{eq:rhohyp}
 \rho(m)=\frac12
 \frac{{}_2F_1(\frac12,\frac12;2;m)}{{}_2F_1(\frac12,\frac12;1;m)}.
\end{equation}
The Markov representation of Dyachenko--Karp \cite[Example~8]{DyachenkoKarp2022}, specialized to these parameters, implies that $1-\rho$ is absolutely monotone.  Therefore $-\log\rho$ is absolutely monotone and $\log\rho$ is strictly concave on $(0,1)$.  (The integral representation has a mild logarithmic endpoint singularity after the usual real-axis continuation; for numerical evaluation the substitution $u=e^{-s}$ removes the misleading slow convergence at $u=0$.)

\begin{theorem}[Sharp elliptic ratio]\label{thm:Rsharp}
For $L\ge0$,
\begin{equation}\label{eq:Rsharp}
 \boxed{\qquad R(L)\ge\Rstar=\frac{\Gamma(1/4)^8}{64\pi^4},\qquad}
\end{equation}
with equality only at $L=0$, and $R'(L)>0$ for $L>0$.
\end{theorem}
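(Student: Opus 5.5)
We will reduce the statement to a one-variable monotonicity claim in the modular parameter $m$, using the representation $R=[\rho(m)\rho(1-m)]^{-1}$ and the strict concavity of $\log\rho$ on $(0,1)$ established just above.

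\emph{Step 1: symmetric function of $m$.} Put
\[
 g(m)=\log\rho(m)+\log\rho(1-m),
\]
so that $\log R=-g(m)$. Then $g$ is symmetric under $m\mapsto1-m$. It is also strictly concave on $(0,1)$, being a sum of two strictly concave functions. A symmetric strictly concave function attains its unique maximum at $m=1/2$, with $g'(m)>0$ for $m<1/2$ and $g'(m)<0$ for $m>1/2$. Consequently $R$, viewed as a function of $m$, has a strict global minimum at $m=1/2$. It is strictly decreasing in $m$ on $(0,1/2)$ and strictly increasing on $(1/2,1)$.

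\emph{Step 2: transfer to $L$.} By \eqref{eq:ellipticbasic} we have $m'=-Aq$. On the theta trajectory $A>0$ and $q=m(1-m)>0$, so $m$ is strictly decreasing in $L$. At $L=0$ we have $e^L=1$, hence $K=K_c$, which forces $m=1/2$. Equivalently, $\theta_2(1)=\theta_4(1)$ by the modular identities in the proof of Proposition~\ref{prop:flow}. Hence $L\ge0$ corresponds to $m\in(0,1/2]$. The chain rule gives
\[
 \frac{\D}{\D L}\log R=-g'(m)\,m'=g'(m)\,Aq.
\]
This is strictly positive for $L>0$, because then $m<1/2$ and so $g'(m)>0$. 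Therefore $R'(L)>0$ for $L>0$. We conclude $R(L)>R(0)$ for $L>0$, with equality only at $L=0$.

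\emph{Step 3: identify $R(0)$.} From Proposition~\ref{prop:flow}, $a(0)=0$ and $U(0)=\Rstar$. Substituting into \eqref{eq:Rdef} gives $R(0)=U(0)/(1-a(0)^2)=\Rstar$. As a consistency check, at $m=1/2$ the Legendre relation $2EK-K^2=\pi/2$ gives $X=X_c=\pi/(4K)$. Then $R=K^2\cdot\tfrac14\cdot16K^2/\pi^2=4K^4/\pi^2$. With $K(1/2)=\Gamma(1/4)^2/(4\sqrt\pi)$ this equals $\Gamma(1/4)^8/(64\pi^4)$, as required.

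The main obstacle is not this argument but its input: the strict concavity of $\log\rho$ derived from the Dyachenko--Karp Markov representation. We must make sure it truly holds on the whole open interval $(0,1)$, including near the endpoint singularities. If the absolute-monotonicity route is in doubt, a fallback is a direct argument using the hypergeometric ratio \eqref{eq:rhohyp}, with continued-fraction (Gauss) positivity giving $(\log\rho)''<0$. We also need the denominators $XX_c>0$; this follows from \eqref{eq:phaseell} together with $|a|<1$, which is a consequence of $U>0$ (Lemma~\ref{lem:Upos}).
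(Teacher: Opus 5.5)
Your argument is correct and is essentially the paper's proof: strict concavity of $\log\rho$ together with the symmetry $m\mapsto 1-m$ places the minimum of $R=[\rho(m)\rho(1-m)]^{-1}$ at $m=1/2$ (that is, $L=0$) and gives $(\log\rho)'(m)>(\log\rho)'(1-m)$ for $m<1/2$, which combined with $m'=-Aq<0$ yields $R'(L)>0$, and the constant comes from Legendre's relation and $K(1/2)$. One small correction to your closing remark: $U>0$ alone does not imply $|a|<1$, but you do not need that step, since $X=m\sigma(m)>0$ and $X_c=(1-m)\sigma(1-m)>0$ follow directly from the integral \eqref{eq:sigmaell}.
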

\begin{proof}
Concavity of $g=\log\rho$ gives
$g(m)+g(1-m)\le2g(1/2)$.  Hence $R(m)\ge\rho(1/2)^{-2}$.  Legendre's relation and $K(1/2)=\Gamma(1/4)^2/(4\sqrt\pi)$ yield \eqref{eq:Rsharp}.  For $m<1/2$, $g'(m)>g'(1-m)$; since $m'(L)<0$, this gives $R'(L)>0$.
\end{proof}

A second ratio gives a sharper invariant region in the $(a,\kappa)$-plane.  Define
\begin{equation}\label{eq:sigmaell}
 \sigma(m)=\frac{X(m)}m
 =\int_0^{\pi/2}\frac{\cos^2\theta}{\sqrt{1-m\sin^2\theta}}\,\D\theta.
\end{equation}
H\"older's inequality shows that $\sigma$ is strictly log-convex.  Since
\begin{equation}\label{eq:RoverA}
 \frac RA=\frac{\pi}{4\sigma(m)\sigma(1-m)},
\end{equation}
the quotient $R/A$ decreases along $L>0$.  With
\begin{equation}\label{eq:kappa}
 \kappa=\frac{R'}R,
\end{equation}
we obtain
\begin{equation}\label{eq:wedge}
 \boxed{\qquad 0<\kappa<a<1,\qquad R\ge\Rstar.\qquad}
\end{equation}
The resulting curvature direction is consistent with the independent theta-function monotonicity estimates in \cite{Faulhuber2021}; that observation is not used in the proof.

\subsection{Reduced logarithmic dynamics}
From $U=R(1-a^2)$, \eqref{eq:kernelAU} gives
\begin{equation}\label{eq:Sfactor}
 \Phi(L/4)=\frac{A^{1/4}}{8\pi^{1/4}}
 (1-a^2)\left(R-\frac32\right).
\end{equation}
Let
\begin{equation}\label{eq:Gphase}
 G(L)=\log\Phi(L/4),
\end{equation}
and set
\begin{equation}\label{eq:Pphase}
 P=G'=a\left(\frac54-R\right)
 +\frac{R\kappa}{R-3/2}.
\end{equation}
The triple $(a,R,\kappa)$ satisfies the closed system
\begin{equation}\label{eq:aRk}
\boxed{\begin{aligned}
 a'&=\tfrac12(1-a^2)(R-1),\\
 R'&=R\kappa,\\
 \kappa'&=a(R+1)\kappa
 +\tfrac12\bigl[(1-a^2)R^2-2(2-a^2)R-(1+a^2)\bigr].
\end{aligned}}
\end{equation}
Let $\mathcal D$ denote this autonomous vector field and put
\begin{equation}\label{eq:Pj}
 P_j=\mathcal D^jP\qquad(j\ge1).
\end{equation}
Thus $P_1=G''$, $P_2=G^{(3)}$, etc.

The invariant inequalities \eqref{eq:wedge} give two useful signs.  First,
\begin{equation}\label{eq:Pneg}
 P<0.
\end{equation}
Indeed, with $q_0=\kappa/a\in(0,1)$,
\[
 \frac Pa=\frac54-R+\frac{Rq_0}{R-3/2}<0
\]
for $R\ge\Rstar$.  Second,
\begin{equation}\label{eq:P1neg}
 P_1=-\frac{Q}{8(2R-3)^2}<0,
\end{equation}
where
\begin{align}\label{eq:Q}
Q={}&28R^3a^2+4R^3-100R^2a^2-80R^2a\kappa+84R^2\nonumber\\
&+117Ra^2+120Ra\kappa+48R\kappa^2-165R-45a^2+45.
\end{align}
To see $Q>0$, write $Q=C_0(R)+a^2C_1(R,q_0)$.  Since $\partial_{q_0}C_1<0$ for $R\ge\Rstar$ and
\[
 C_0+C_1(R,1)=8R(4R^2-12R+15)>0,
\]
the conclusion follows.

\subsection{The clock defect and the quartic}
Because $t=L^2/16$, introduce
\begin{equation}\label{eq:uphase}
 u=\frac1L.
\end{equation}
We call $u$ the \emph{clock variable}: after the autonomous reduction \eqref{eq:aRk}, it is the only explicit occurrence of the independent variable $L$.
The first-level log-concavity is
\begin{equation}\label{eq:Hphase}
 H=uP-P_1>0.
\end{equation}
From $P,P_1<0$ it follows that $0<u<P_1/P$.  Define
\begin{equation}\label{eq:delta}
 \delta=\frac{H}{-P_1}=1-u\frac{P}{P_1},
 \qquad0<\delta<1,
\end{equation}
and
\begin{equation}\label{eq:Cab}
 C=-\frac{P^2}{P_1}>0,\qquad
 \alpha=\frac{PP_2}{P_1^2},\qquad
 \beta=\frac{P^2P_3}{P_1^3}.
\end{equation}
A direct differentiation of \eqref{eq:Hphase} gives
\[
 H'=-uH-P_2,\qquad H''=2u^2H+uP_2-P_3.
\]
Substitution into \eqref{eq:CD} gives the exact normalized reduction
\begin{equation}\label{eq:Fquartic}
 \boxed{\quad
 \Fcal(\delta;L)
 =6\delta^2(1-\delta)^2-2C\delta^3+\beta\delta-\alpha^2<0.
 \quad}
\end{equation}

\begin{lemma}[Clock-free hierarchy]\label{lem:clockfree}
The quantities $C,\alpha,\beta$ and
\begin{equation}\label{eq:gamma}
 \gamma=\frac{P^3P_4}{P_1^4}
\end{equation}
are functions of the autonomous state $(a,R,\kappa)$ alone.  The variable $\delta$ is the only quantity in \eqref{eq:Fquartic} that contains the explicit clock $u=1/L$.
\end{lemma}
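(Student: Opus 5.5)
The claim is structural: $P$ and each $P_j=\mathcal D^jP$ are functions of the state $(a,R,\kappa)$ alone. Everything else follows from that. The plan has three steps.

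\emph{Step 1: $P$ is a state function.} Formula \eqref{eq:Pphase} writes $P=G'$ as an explicit rational function of $(a,R,\kappa)$, with denominator $R-3/2$. This denominator is nonzero because $R\ge\Rstar>3/2$ by Theorem~\ref{thm:Rsharp}. The point to check is that this expression really is $G'$ along the Jacobi orbit. Differentiate \eqref{eq:Sfactor} logarithmically:
\[
G'=\tfrac14\frac{A'}{A}-\frac{2aa'}{1-a^2}+\frac{R'}{R-3/2}.
\]
Then substitute $A'/A=a$, $a'=\tfrac12(1-a^2)(R-1)$ and $R'=R\kappa$. The terms combine to $a(\tfrac54-R)+R\kappa/(R-3/2)$.

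\emph{Step 2: the system \eqref{eq:aRk} holds along the orbit.} The $a'$ equation follows from \eqref{eq:flow} together with $U=R(1-a^2)$. The $R'$ equation is the definition \eqref{eq:kappa} of $\kappa$. For the $\kappa'$ equation, use $U'=2U(a+\chi)$ and $U'=R'(1-a^2)-2aa'R$ to express $\chi$ in terms of $(a,R,\kappa)$. Then differentiate $\kappa=R'/R$ once more, using $\chi'=a\chi-U$, and simplify. This reproduces the third line of \eqref{eq:aRk}. I expect this to be the only computational step. It is a finite rational identity, which I would verify symbolically.

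\emph{Step 3: induction and conclusion.} Since the field $\mathcal D$ of \eqref{eq:aRk} is polynomial in $(a,R,\kappa)$ and contains no $L$, Step~2 gives the chain rule along the orbit: $\frac{\D}{\D L}F(a,R,\kappa)=(\mathcal DF)(a,R,\kappa)$ for every smooth $F$. Induction from Step~1 then shows that each $P_j=G^{(j+1)}$ is a rational function of $(a,R,\kappa)$. Its only denominators are powers of $2R-3$, which is positive on the orbit.

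Now read off the claimed quantities:
\[
C=-\frac{P^2}{P_1},\qquad \alpha=\frac{PP_2}{P_1^2},\qquad \beta=\frac{P^2P_3}{P_1^3},\qquad \gamma=\frac{P^3P_4}{P_1^4}.
\]
Each is a rational function of $P,P_1,\dots,P_4$. The divisions by $P_1$ are legitimate because $P_1<0$ by \eqref{eq:P1neg}. So $C,\alpha,\beta,\gamma$ are state functions.

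Finally, $\delta=1-uP/P_1$ involves $u=1/L$ explicitly, and it is the only term of \eqref{eq:Fquartic} built from $H=uP-P_1$. The clock enters the reduction only through $H$ and its derivatives $H'=-uH-P_2$ and $H''=2u^2H+uP_2-P_3$. After normalizing by powers of $P_1$ and $P$, all of this collapses into $\delta$, while the remaining coefficients are the state functions above. Hence $\delta$ is the sole clock-dependent quantity in \eqref{eq:Fquartic}.
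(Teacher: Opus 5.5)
Your proof is correct and takes the same approach as the paper: each $P_j=\mathcal D^jP$ is a rational function of $(a,R,\kappa)$, hence so are $C,\alpha,\beta,\gamma$, while $\delta=1-uP/P_1$ alone carries the clock. You also verify \eqref{eq:Pphase} and \eqref{eq:aRk} along the orbit (your elimination $\chi=\tfrac12(\kappa-a(R+1))$ does reproduce the $\kappa'$ equation) and check that the denominators $2R-3$ and $P_1$ never vanish, details the paper's two-line proof leaves implicit.
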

\begin{proof}
Each $P_j=\mathcal D^jP$ is a rational function of $(a,R,\kappa)$.  The assertion follows from \eqref{eq:Cab}, \eqref{eq:gamma}, and \eqref{eq:delta}.
\end{proof}
This lemma legitimizes evaluating a hypothetical contact root $\delta$ with the actual clock-free coefficients at the same Jacobi state.

\subsection{Quadratic cone and inward normal}
For fixed $C$, make the triangular change
\begin{equation}\label{eq:shear}
 \widetilde\beta
 =\beta+6\delta-(12+2C)\delta^2+6\delta^3.
\end{equation}
Then
\begin{equation}\label{eq:cone}
 \boxed{\qquad \Fcal=\delta\widetilde\beta-\alpha^2.\qquad}
\end{equation}
Thus the equality surface is polynomially isomorphic to the quadratic cone $XY=Z^2$, with its ordinary $A_1$ node at the origin.  This is a moving coordinate description because $C$ varies along the orbit; only the barrier identity \eqref{eq:cone} is used below.

\begin{figure}[t]
\centering
\includegraphics[width=0.82\textwidth]{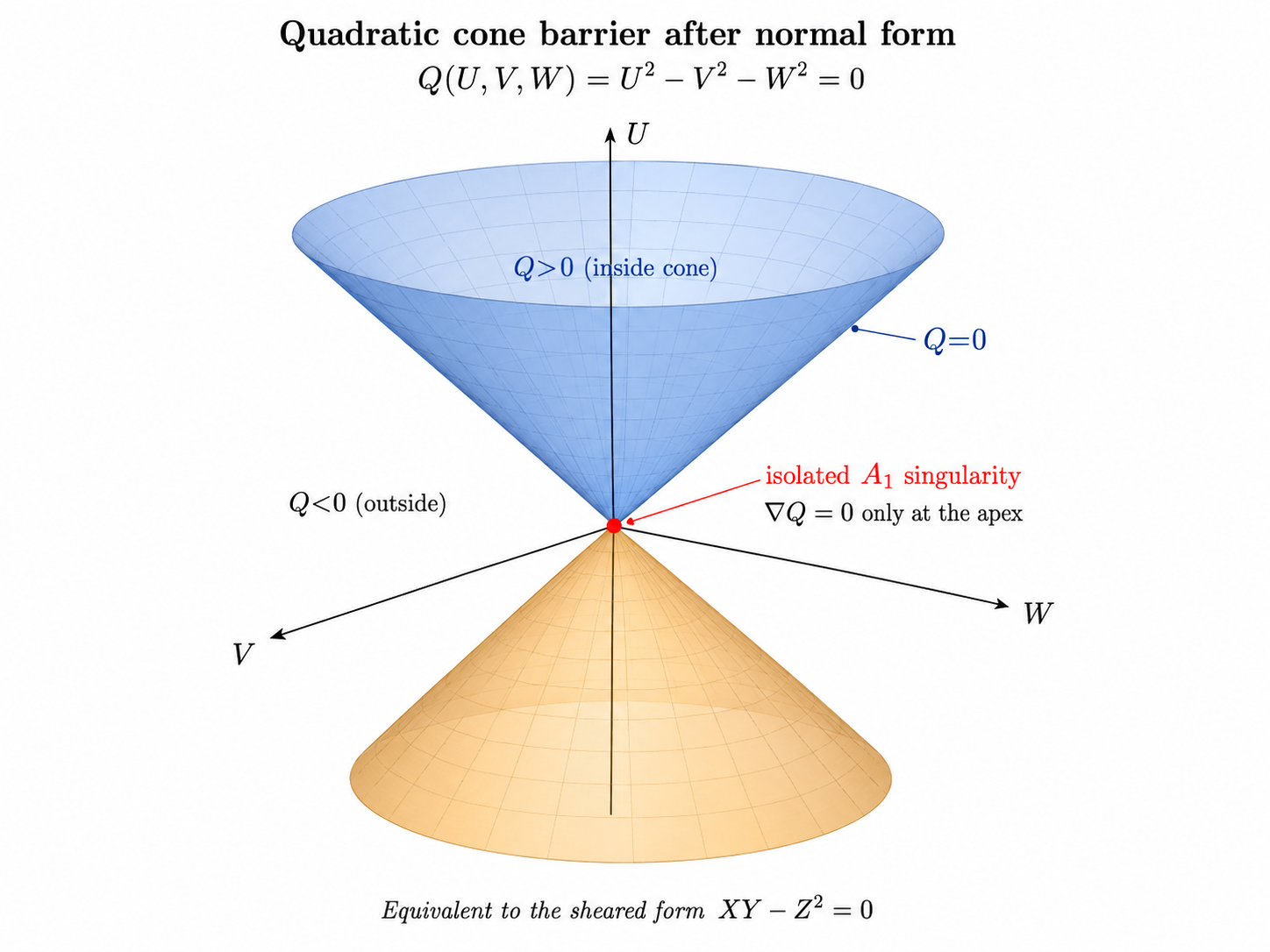}
\caption{\label{fig:cone}The static normal form of the quartic boundary in Proof~II.}
\end{figure}

Figure~\ref{fig:cone} records the static cone model behind \eqref{eq:cone}.  It should be read as a guide to the geometry, not as the full dynamical picture.  The proof uses only the barrier identity \eqref{eq:cone} and the inward-normal formula \eqref{eq:normal}; nevertheless the figure makes the role of the isolated $A_1$ singularity concrete.  The apex is the unique place where the equality surface fails to be smooth, and it is precisely near that singular tangency that the local modular certificate is needed before a global barrier argument can begin.

Put
\begin{equation}\label{eq:omega}
 \omega=-\frac PC>0.
\end{equation}
The normalized hierarchy satisfies
\begin{equation}\label{eq:hierarchy}
\begin{aligned}
 \delta'&=\omega(1-\delta)(\alpha-\delta),\\
 \alpha'&=\omega(\alpha+\beta-2\alpha^2),\\
 \beta'&=\omega(\gamma+2\beta-3\alpha\beta),\\
 C'&=\omega C(2-\alpha).
\end{aligned}
\end{equation}
At a cone contact $\Fcal=0$, set $q=\alpha/\delta$.  Eliminating $\beta$ with the cone equation gives
\begin{equation}\label{eq:normal}
 \boxed{\qquad
 \frac{\Fcal'}\omega
 =\delta\,[\gamma-\Gamma_C(\delta,q)],
 \qquad}
\end{equation}
where
\begin{align}\label{eq:GammaC}
\Gamma_C(\delta,q)=\delta\bigl[&4C\delta^2+8C\delta q-4C\delta
-18\delta^3-18\delta^2q+54\delta^2\nonumber\\
&-\delta q^2+36\delta q-54\delta
+q^3+q^2-18q+18\bigr].
\end{align}
Thus every contact is strictly inward if $\Gamma_C-\gamma>0$.

\subsection{Certified compact cone barrier}
The local Proposition~\ref{prop:local} gives $\Fcal<0$ up to the rational handoff
\begin{equation}\label{eq:L0}
 L_0=0.5932,
 \qquad (L_0/4)^2=0.02199289<0.022.
\end{equation}
We now certify that no first contact can occur before $L=0.80$.  Figure~\ref{fig:contact} summarizes the geometry of this compact step.  The green curve is the physical Jacobi trajectory $L\mapsto\delta(L)$, while the solid blue and dashed orange curves are the two real branches of the contact locus $\Fcal(\delta;L)=0$.  Their tangency at $L=0$ identifies the nonuniform modular region treated by Proposition~\ref{prop:local}; the two dotted lines mark the handoff $L_0=0.5932$ and the anchor $L=0.80$.

\begin{figure}[t]
\centering
\includegraphics[width=0.88\textwidth]{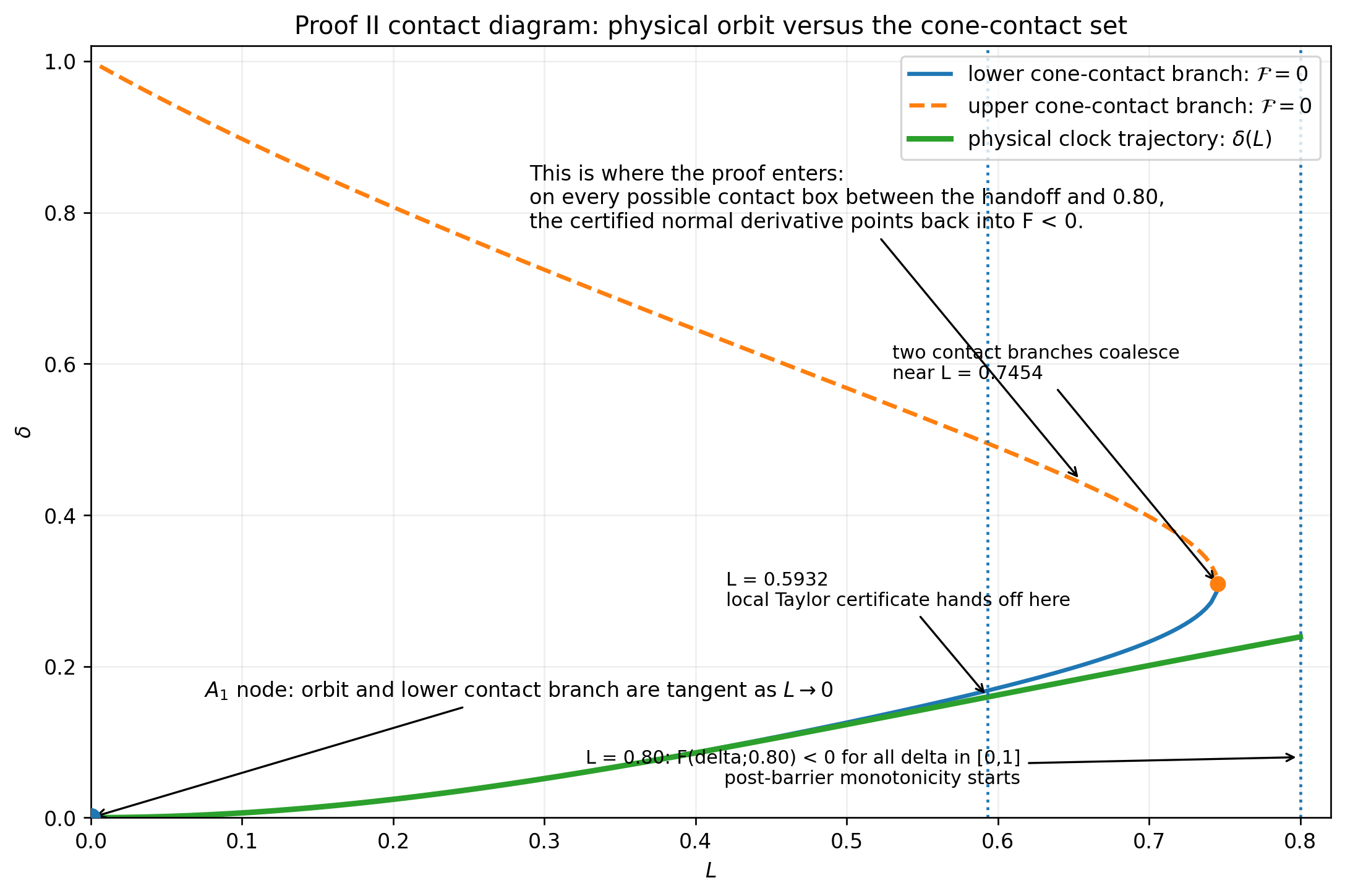}
\caption{\label{fig:contact}Contact geometry for the compact cone-barrier step in Proof~II.}
\end{figure}

Between the two dotted lines, the directed-interval certificate retains every box in which $\Fcal$ could vanish and proves $\Gamma_C-\gamma>0$ on every such box, excluding a first contact with either branch.  The visible coalescence near $L\approx0.7454$ is only a visual landmark; no fold-uniqueness or one-root argument is used.  Thus Figure~\ref{fig:contact} also explains why an all-contact certificate is preferable: the lower branch is the natural candidate for a first contact, while the upper branch makes a uniqueness-based argument unnecessarily fragile.  At $L=0.80$, \eqref{eq:F080} supplies the anchor from which the post-barrier monotonicity argument propagates $\Fcal<0$ for all larger $L$.

It is convenient to use $y=\pi e^L$.  The interval
\begin{equation}\label{eq:ycentral}
 5.68\le y\le7.00
\end{equation}
contains the image of $0.5932\le L\le0.80$.  For the first theta summand, the five derivatives $G_1',\ldots,G_1^{(5)}$ are exact rational functions of $y$.  If
\[
 \Phi=\phi_1(1+\varepsilon),\qquad \ell=\log(1+\varepsilon),
\]
the exact tail certificate \path{certificate_tail_L.py} proves, for $y\ge5.68$,
\begin{equation}\label{eq:elltailcentral}
 |\ell^{(1)}|<1.51\!\times10^{-5},\quad
 |\ell^{(2)}|<2.51\!\times10^{-4},\quad
 |\ell^{(3)}|<0.004001,
\end{equation}
\begin{equation}\label{eq:elltailcentral2}
 |\ell^{(4)}|<0.05501,\qquad
 |\ell^{(5)}|<0.701,
\end{equation}
where these derivatives are with respect to $L$.  Therefore directed intervals for the rational one-term derivatives plus \eqref{eq:elltailcentral}--\eqref{eq:elltailcentral2} give rigorous enclosures for $C,\alpha,\beta,\gamma$.

The script \path{certificate_cone_y_080.py} subdivides \eqref{eq:ycentral} into intervals of width $0.002$ and $0.05\le\delta\le1$ into intervals of width $0.001$.  The range $0\le\delta\le0.05$ is separately certified to have $\Fcal<0$.  Every box for which the interval evaluation of $\Fcal$ can contain zero is retained; on \emph{every} such box, including both positive cone branches, the normal margin satisfies
\begin{equation}\label{eq:margincone}
 \boxed{\qquad \Gamma_C-\gamma>0.18957.\qquad}
\end{equation}
The smallest certified lower bound is $0.1895705490\ldots$.  Hence \eqref{eq:normal} is strictly negative at every possible contact.  Since the physical orbit enters this interval with $\Fcal<0$, a first-contact argument proves
\begin{equation}\label{eq:Fto080}
 \Fcal(\delta(L);L)<0,
 \qquad L_0\le L\le0.80.
\end{equation}
This single certificate handles the two-root issue automatically; no fold uniqueness is required.

\subsection{The post-barrier monotonicity theorem}
We now prove a stronger statement: for fixed $\delta$, the quartic decreases with $L$ after $0.80$.
From \eqref{eq:hierarchy},
\begin{equation}\label{eq:partialF}
 \frac1\omega\frac{\partial\Fcal}{\partial L}
 =-\mathsf A\,\delta^3+\mathsf B\,\delta-\mathsf D,
\end{equation}
where
\begin{equation}\label{eq:ABD}
 \mathsf A=2C(2-\alpha),\qquad
 \mathsf B=\gamma+2\beta-3\alpha\beta,
 \qquad
 \mathsf D=2\alpha(\alpha+\beta-2\alpha^2).
\end{equation}
If $\mathsf A,\mathsf B,\mathsf D>0$, the maximum over $\delta\ge0$ of
$-\mathsf A\delta^3+\mathsf B\delta$ is
\begin{equation}\label{eq:cubicmax}
 \frac{2\mathsf B^{3/2}}{3\sqrt{3\mathsf A}}.
\end{equation}
Consequently
\begin{equation}\label{eq:Psi}
 27\mathsf A\mathsf D^2-4\mathsf B^3>0
\end{equation}
implies $\partial_L\Fcal<0$ for all $\delta\ge0$.

\begin{lemma}[Finite post-barrier interval]\label{lem:finitepost}
For $6.99\le y\le20$, the full theta kernel satisfies
\[
 \mathsf A>0,\qquad \mathsf B>0,\qquad \mathsf D>0,
 \qquad
 27\mathsf A\mathsf D^2-4\mathsf B^3>28.39.
\]
Hence $\partial_L\Fcal(\delta,L)<0$ for every $\delta\ge0$ on this range.
\end{lemma}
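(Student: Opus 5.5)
The plan is a computer-assisted verification on the compact range $6.99\le y\le 20$, i.e.\ $\log(6.99/\pi)\le L\le\log(20/\pi)$. It reuses the first-summand-plus-tail machinery already employed for the cone certificate \eqref{eq:margincone}.

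\textbf{Step 1: enclosures for $P,\dots,P_4$.} First I express $\mathsf A,\mathsf B,\mathsf D$ in terms of the $L$-derivatives of $G=\log\Phi(L/4)$. By \eqref{eq:Cab}, \eqref{eq:gamma} and $P_j=G^{(j+1)}$, the inputs are exactly $P=G'$ through $P_4=G^{(5)}$. I write $G=G_1+\ell$ with $G_1=\log\phi_1$. Each $G_1^{(k)}$, $1\le k\le5$, is an explicit rational function of $y$, so on every subinterval of $[6.99,20]$ it can be enclosed by exact-rational or outward-rounded interval arithmetic, for instance Bernstein expansion in $y$. The tail terms $\ell^{(k)}$ are bounded using the alternating-sign and monotone-in-$y$ property from Proof~I. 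Their absolute values decrease with $y$, so the bounds \eqref{eq:elltailcentral}--\eqref{eq:elltailcentral2} at $y=5.68$ already hold for $y\ge6.99$. For better margins I would re-evaluate \path{certificate_tail_L.py} at $y=6.99$, where the tail is smaller by roughly a factor $e^{-3\cdot1.3}$. This gives interval enclosures of $P,P_1,\dots,P_4$ on each subinterval.

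\textbf{Step 2: enclosures for $\mathsf A,\mathsf B,\mathsf D$ and $\Psi$.} Next I form $C=-P^2/P_1$, $\alpha$, $\beta$ and $\gamma$ by interval division. This is legitimate because $P<0$ and $P_1<0$ by \eqref{eq:Pneg} and \eqref{eq:P1neg}, so $P_1$ stays bounded away from zero and the enclosures do not blow up. From these I enclose $\mathsf A=2C(2-\alpha)$, $\mathsf B$, $\mathsf D$ and $\Psi=27\mathsf A\mathsf D^2-4\mathsf B^3$. I subdivide $[6.99,20]$ uniformly, with width about $0.002$ near the left end where margins are smallest and coarser widths further out. Wherever a lower bound fails, I bisect adaptively, and I record the smallest certified lower bound of $\Psi$, which should exceed $28.39$. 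Positivity of $\mathsf A$ should be easy, since $C>0$ and $\alpha<2$ are expected to hold with room. The signs of $\mathsf B$ and $\mathsf D$ come out of the same boxes.

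\textbf{Step 3: conclusion and main obstacle.} Given $\mathsf A,\mathsf B,\mathsf D>0$, maximizing $-\mathsf A\delta^3+\mathsf B\delta$ over $\delta\ge0$ gives \eqref{eq:cubicmax}. Squaring shows that \eqref{eq:Psi} is equivalent to $\mathsf D$ exceeding this maximum. Hence $\partial_L\Fcal/\omega<0$ for all $\delta\ge0$, and since $\omega>0$ by \eqref{eq:omega}, $\partial_L\Fcal<0$.

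The main obstacle is dependency-driven interval overestimation in $\Psi$. It is a high-degree rational expression in $P_0,\dots,P_4$, and $\mathsf D^2$ and $\mathsf B^3$ may nearly cancel. To control this, I would not evaluate $\Psi$ naively from enclosures of $\alpha,\beta,\gamma$. Instead I would clear denominators: multiply by the positive factor $P_1^{12}/P^{\,k}$ for the appropriate $k$, which gives a polynomial in $(P,P_1,\dots,P_4)$. I would then evaluate that polynomial with centered (mean-value) forms on each box, using the one-term exact derivatives as the centers and the tail bounds as small radii. If that is still too wide near $y=6.99$, I would refine the mesh there, where the tail perturbation is largest.
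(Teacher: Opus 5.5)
Your proposal is correct and is essentially the paper's proof: the paper's proof is a directed-interval sweep of $6.99\le y\le20$. It encloses the exact one-term derivatives $G_1^{(k)}$ plus the tail envelopes \eqref{eq:elltailcentral}--\eqref{eq:elltailcentral2} (valid for all $y\ge5.68$), forms $C,\alpha,\beta,\gamma$, then $\mathsf A,\mathsf B,\mathsf D$ and $27\mathsf A\mathsf D^2-4\mathsf B^3$, and concludes via \eqref{eq:cubicmax} and $\omega=P_1/P>0$. The paper's certificate uses a uniform mesh of width $0.005$ with the $y=5.68$ envelopes directly, so your adaptive refinement, re-evaluated tails and centered forms are optional safeguards. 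If you do clear denominators, note that only powers of $P_1$ occur there, so the positive factor that turns $27\mathsf A\mathsf D^2-4\mathsf B^3$ into a polynomial is $-P_1^{15}/P^6$ rather than $P_1^{12}/P^k$.
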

\begin{proof}[Directed interval certificate]
The finite-monotonicity certificate is the file\par\smallskip\noindent\path{certificate_monotone_finite_699.py}.\par\smallskip\noindent It uses the same exact one-term rational functions and the independently certified tail envelopes \eqref{eq:elltailcentral}--\eqref{eq:elltailcentral2}.  It partitions $6.99\le y\le20$ into intervals of width $0.005$.  The minimum lower enclosure for the left-hand side of \eqref{eq:Psi} is $28.39158275\ldots$.
\end{proof}

The infinite tail $y\ge20$ admits an analytic estimate.

\begin{lemma}[Analytic large-$y$ monotonicity]\label{lem:largepost}
For $y\ge20$,
\begin{equation}\label{eq:ABDcoarse}
 \mathsf A\ge\frac y2,\qquad
 \frac3y<\mathsf B<\frac6y,
 \qquad
 \mathsf D\ge\frac3y.
\end{equation}
Consequently $\partial_L\Fcal(\delta,L)<0$ for every $\delta\ge0$.
\end{lemma}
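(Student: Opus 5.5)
The plan is to prove \eqref{eq:ABDcoarse} by an explicit asymptotic expansion in $1/y$ of the hierarchy $P_j$, and then check \eqref{eq:Psi} from the coarse bounds. The deduction of monotonicity is immediate once \eqref{eq:ABDcoarse} holds. Then $\mathsf A,\mathsf B,\mathsf D>0$, and
\[
27\mathsf A\mathsf D^2\ge 27\cdot\frac y2\cdot\frac9{y^2}=\frac{243}{2y},\qquad 4\mathsf B^3<\frac{864}{y^3}.
\]
Hence $27\mathsf A\mathsf D^2-4\mathsf B^3>0$ as soon as $y^2>864\cdot2/243\approx7.1$, which certainly holds for $y\ge20$. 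By \eqref{eq:cubicmax}--\eqref{eq:Psi} this gives $\partial_L\Fcal<0$ for every $\delta\ge0$. So everything rests on \eqref{eq:ABDcoarse}.

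\emph{Step 1: the one-term hierarchy.} With $y=\pi e^L$ we have $y'=y$. For the first summand, $G_1=\log\pi+\log(2y-3)+\tfrac54L-y$, and $(\log(2y-3))'=1+3/(2y-3)$. This gives exactly
\[
P^{(1)}=-y+\tfrac94+\tfrac{3}{2y-3},\qquad P^{(1)}_j=-y+\Bigl(\tfrac{\D}{\D L}\Bigr)^{j}\tfrac{3}{2y-3}\quad (1\le j\le4).
\]
The last terms are explicit rational functions of $y$ of size $O(1/y)$. Concretely, $-6y/(2y-3)^2$ and so on, each bounded by, say, $2/y$ for $y\ge20$.

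\emph{Step 2: the tail.} The full $P_j$ differs from $P^{(1)}_j$ by $\ell^{(j+1)}$. By the alternating, decreasing tail structure behind \eqref{eq:rhodirect}, evaluated at $y\ge20$, these terms are bounded by $e^{-2.9y}$, which is negligible against $y^{-2}$. I would then write
\[
P_j=-y(1-\varepsilon_j),\qquad \varepsilon_0=\frac{9}{4y}+\frac{\theta_0}{y^2},\qquad |\varepsilon_j|\le\frac{c}{y^2}\quad(j\ge1),
\]
with explicit constants $|\theta_0|\le 1$ and $c\le 3$, say. These are verifiable by clearing denominators on $y\ge20$.

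\emph{Step 3: expand the clock-free coefficients.} Substituting gives
\[
C=y\frac{(1-\varepsilon_0)^2}{1-\varepsilon_1},\quad \alpha=\frac{(1-\varepsilon_0)(1-\varepsilon_2)}{(1-\varepsilon_1)^2},\quad \beta=\frac{(1-\varepsilon_0)^2(1-\varepsilon_3)}{(1-\varepsilon_1)^3},\quad \gamma=\frac{(1-\varepsilon_0)^3(1-\varepsilon_4)}{(1-\varepsilon_1)^4}.
\]
Then $\mathsf A=2C(2-\alpha)=2y(1+O(1/y))\ge y/2$ trivially. The key cancellation is at leading order:
\[
\mathsf B=\gamma+2\beta-3\alpha\beta=2\varepsilon_0+O(y^{-2}),\qquad \mathsf D=2\alpha(\alpha+\beta-2\alpha^2)=2\varepsilon_0+O(y^{-2}).
\]
Both are therefore $\tfrac{9}{2y}+O(y^{-2})$, which sits strictly inside the window $(3/y,6/y)$ and leaves slack $3/(2y)$.

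\emph{Main obstacle.} The delicate step is making the $O(y^{-2})$ remainders in $\mathsf B$ and $\mathsf D$ explicit, since their leading terms cancel. I would handle this by a second-order Taylor bound for the products of factors $(1-\varepsilon_j)^{\pm k}$ with all $|\varepsilon_j|\le 1/8$. This gives a remainder of the form $K/y^2$ with $K$ roughly $50$, and $50/y^2\le 2.5/y<$ the slack only barely fails. So I would actually retain $\varepsilon_0^2$ exactly, noting that its coefficient in $\mathsf B$ and $\mathsf D$ is a fixed small integer, and bound only the genuinely mixed terms crudely. If this is still too tight, the fallback is to treat $\mathsf B$ and $\mathsf D$ as exact rational functions of $y$, up to the exponentially small tail, and verify the inequalities by clearing denominators. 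One then checks positivity of the resulting polynomial in $y-20$ via nonnegative coefficients, in the same shift style as used for \eqref{eq:rhodirect}.
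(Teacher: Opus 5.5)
Your deduction of $\partial_L\Fcal<0$ from \eqref{eq:ABDcoarse} is correct; it is equivalent to the paper's bound $\max_{\delta\ge0}(-\mathsf A\delta^3+\mathsf B\delta)\le 8/y^2<3/y\le\mathsf D$. Your asymptotics $\mathsf B,\mathsf D=2\varepsilon_0+O(y^{-2})\sim 9/(2y)$ are also right.

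The problem is your primary route to the lower bounds at the endpoint $y=20$. There the $O(y^{-2})$ corrections are as large as the slack, and only their signs save the inequalities. For the first summand at $y=20$ one has $y\varepsilon_1\approx-0.088$, $y\varepsilon_2\approx0.102$, $y\varepsilon_3\approx-0.134$ and $y\varepsilon_4\approx0.209$. The part of $\mathsf B$ and of $\mathsf D$ with $\varepsilon_1=\dots=\varepsilon_4=0$ is $2(1-\varepsilon_0)^2\varepsilon_0\approx3.64/y$, which leaves a slack of only $\approx0.64/y$ above $3/y$. The linear mixed terms $-5\varepsilon_1+3\varepsilon_2+\varepsilon_3-\varepsilon_4$ in $\mathsf B$ and $-6\varepsilon_1+6\varepsilon_2-2\varepsilon_3$ in $\mathsf D$ have absolute sizes $\approx1.09/y$ and $\approx1.40/y$. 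For comparison, the true values are $y\overline{\mathsf B}\approx3.89$ and $y\overline{\mathsf D}\approx4.48$. So bounding the mixed terms crudely fails even with $\varepsilon_0^2$ kept exactly, and your fallback is mandatory, not optional.

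That fallback is the paper's proof. Exact shifted-polynomial positivity on $y\ge20$ for the one-term quantities gives $y/2<-\overline P_j<2y$, $\overline{\mathsf A}\ge y$, $19/(5y)<\overline{\mathsf B}<5/y$ and $\overline{\mathsf D}\ge4/y$. It must be paired with an explicit tail step, because the full $\mathsf B,\mathsf D$ are not rational in $y$. You need to certify the one-term bounds with a margin. Then propagate $|\ell^{(k)}|<2y^{-5}$ through $C,\alpha,\beta,\gamma$ by the mean-value theorem. The paper does this using $|P_j|\le3y$ and $|P_1|\ge y/3$, and obtains $|\mathsf B-\overline{\mathsf B}|<260000y^{-6}$ and $|\mathsf D-\overline{\mathsf D}|<125000y^{-6}$.

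Several of your stated constants are also false at $y=20$, though harmless once corrected:
\begin{itemize}
\item The tail is not bounded by $e^{-2.9y}$. Already $\rho_2\approx17e^{-3y}>e^{-2.9y}$, and $|\ell^{(5)}|\approx(3y)^5\rho_2\approx10^{-16}$. Use $|\varepsilon^{(k)}_L|\le31(3y)^ke^{-3y}<y^{-5}$ instead.
\item The bound $|(\D/\D L)^j\tfrac{3}{2y-3}|\le2/y$ fails for $j=2,3,4$; for $j=4$ the value is $\approx0.209$.
\item $\theta_0=3y/(2y-3)\in(3/2,1.63)$, so $|\theta_0|\le1$ is false.
\item $y^2|\varepsilon_4|\approx4.2>3$, so $c\le3$ is false.
\end{itemize}
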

\begin{proof}
Let bars denote quantities for the first theta summand.  Exact shifted-polynomial positivity is reproduced by the file\par\smallskip\noindent\path{certificate_monotone_asymptotic.py}.\par\smallskip\noindent It proves for $y\ge20$
\begin{equation}\label{eq:firstasymp}
 \frac y2<-\overline P_j<2y\quad(0\le j\le4),
 \qquad0<\overline\alpha,\overline\beta<1,
 \qquad0<\overline C<y,
\end{equation}
and
\begin{equation}\label{eq:ABDfirst}
 \overline{\mathsf A}\ge y,
 \qquad \frac{19}{5y}<\overline{\mathsf B}<\frac5y,
 \qquad \overline{\mathsf D}\ge\frac4y.
\end{equation}

For the relative theta tail, exact alternating-sign derivative polynomials give, for $0\le k\le5$,
\begin{equation}\label{eq:epstaily20}
 |\varepsilon^{(k)}_L|
 \le31(3y)^k e^{-3y}<y^{-5}.
\end{equation}
The last inequality is strongest at $k=5,y=20$ and improves thereafter.  The derivative formulas for $\ell=\log(1+\varepsilon)$ then give
\begin{equation}\label{eq:elltaily20}
 |\ell^{(k)}|<2y^{-5},\qquad1\le k\le5.
\end{equation}

Using \eqref{eq:firstasymp} and the mean-value theorem on the rational maps
\[
 C=-P^2/P_1,\quad
 \alpha=PP_2/P_1^2,\quad
 \beta=P^2P_3/P_1^3,\quad
 \gamma=P^3P_4/P_1^4,
\]
one obtains the conservative perturbation bounds
\begin{align}\label{eq:pertconst}
 |C-\overline C|&<198y^{-5},&
 |\alpha-\overline\alpha|&<1080y^{-6},\\
 |\beta-\overline\beta|&<14580y^{-6},&
 |\gamma-\overline\gamma|&<174960y^{-6}.
\end{align}
For example, along the segment joining the one-term and full derivative vectors, $|P_j|\le3y$ and $|P_1|\ge y/3$, so the partial derivatives of $\alpha$ sum to at most $540/y$, those of $\beta$ to at most $7290/y$, and those of $\gamma$ to at most $87480/y$; multiplying by the derivative perturbation $2y^{-5}$ gives \eqref{eq:pertconst}.  These estimates imply
\[
 |\mathsf A-\overline{\mathsf A}|<3000y^{-5},\quad
 |\mathsf B-\overline{\mathsf B}|<260000y^{-6},\quad
 |\mathsf D-\overline{\mathsf D}|<125000y^{-6}.
\]
At $y=20$ these errors are already smaller than the gaps needed to pass from \eqref{eq:ABDfirst} to \eqref{eq:ABDcoarse}, and the comparison improves with $y$.  Notice that $\mathsf B$ and $\mathsf D$ are second-order cancellation quantities: their defining combinations tend to zero although the normalized ratios $\alpha,\beta,\gamma$ tend to quantities of order one.  The lower constant $19/5$ in \eqref{eq:ABDfirst} is therefore tuned to the endpoint $y=20$, where it still has a positive certified margin.

Finally, by \eqref{eq:cubicmax} and \eqref{eq:ABDcoarse},
\[
 \max_{\delta\ge0}(-\mathsf A\delta^3+\mathsf B\delta)
 \le\frac8{y^2}<\frac3y\le\mathsf D,
\]
which proves the lemma.
\end{proof}

It remains only to anchor this monotone region.  We deliberately place the anchor at $L=0.80$, safely beyond the tangency region of the quartic boundary.  The directed certificate
\path{certificate_no_root_080.py}, using
$6.9917<\pi e^{0.80}<6.9918$, proves
\begin{equation}\label{eq:F080}
 \boxed{\qquad \Fcal(\delta;0.80)<-0.04482
 \qquad(0\le\delta\le1).\qquad}
\end{equation}
The certified upper bound is $-0.0448287020\ldots$; this larger margin is useful because the earlier anchor $L=0.75$ lies very close to the quartic tangency.  Lemmas~\ref{lem:finitepost}--\ref{lem:largepost} show that, for each fixed $\delta$, $\Fcal(\delta;L)$ decreases for all $L\ge0.80$.  Hence \eqref{eq:F080} implies
\begin{equation}\label{eq:Fallpost}
 \Fcal(\delta;L)<0\qquad(0\le\delta\le1,\ L\ge0.80).
\end{equation}
The cone-barrier conclusion and Proposition~\ref{prop:local} complete Proof~II.\par Hence Theorem~\ref{thm:main} follows a second time.

\section{Comparison of the two proofs}
The two arguments are complementary rather than redundant.

Proof~I treats $\Phi$ directly as a rapidly convergent theta series.  Once the modular neighborhood is left, the first summand creates the sign and the remaining summands are an exponentially small perturbation.  Its strength is quantitative transparency and a short global continuation.

Proof~II explains the same sign through the special Jacobi differential structure.  Jacobi's third-order ODE reduces the kernel to an autonomous flow; elliptic log-concavity and log-convexity produce the sharp invariant region; the fourth-order concavity question becomes the scalar quartic \eqref{eq:Fquartic}; and its equality surface is a quadratic cone in a moving polynomial frame.  The finite certificate is used only on the compact transition interval where a cone crossing is possible.  The large-$L$ continuation is then a monotonicity theorem for the reduced quartic, not a root count.

Both proofs use Proposition~\ref{prop:local}.  This common input is natural: in the cone normalization the physical trajectory and the relevant cone branch become tangent as $L\downarrow0$, and the normal margin tends to zero.  The local modular expansion is therefore not merely a convenient numerical patch.  Figures~\ref{fig:cone} and \ref{fig:contact} make this geometric mechanism explicit: the fixed cone normal form identifies the isolated singularity, while the contact diagram shows where the all-contact certificate and the post-barrier monotonicity theorem intervene.

\section{Tur\'an consequence and scope}
Csordas and Dimitrov introduced the concavity condition \eqref{eq:CD} as a sufficient kernel condition for the double Tur\'an inequalities.  In Csordas' later formulation, with normalized coefficient sequence $\gamma_k$ and
\[
 T_k=\gamma_k^2-\gamma_{k-1}\gamma_{k+1},\qquad
 E_k=T_k^2-T_{k-1}T_{k+1},
\]
an affirmative solution of Open Problem~4.14 yields $E_k\ge0$ through the earlier Csordas--Dimitrov theorem \cite{CD2000,Csordas2015}.  This places Theorem~\ref{thm:main} in the classical Laguerre--Tur\'an hierarchy and the earlier moment-inequality program studied in \cite{CV1988,CravenCsordas2002,DimitrovLucas2011}.  It remains a necessary-condition result in the Riemann-$\Xi$ program and does not prove the Riemann Hypothesis.

\section{Certificate architecture and reproducibility}\label{sec:certificates}
The computer-assisted portions are deliberately finite and separated from the analytic reductions.  This section records the arithmetic model and the subdivision logic so that the certificates can be reproduced independently of the supplied implementation.

\subsection{Arithmetic model}
Polynomial identities and shifted-polynomial positivity tests are performed over the integers or rational numbers.  For interval calculations, each real quantity is represented by a closed interval $[x_-,x_+]$ with directed lower and upper arithmetic.  Algebraic operations use the corresponding inclusion-isotone interval extensions.  Exponential endpoints are evaluated at higher precision and then enlarged before they are inserted into the directed interval computation.  Consequently every displayed certificate is an enclosure statement: a box is discarded only when the relevant interval extension excludes the critical value.

The theta series are truncated only after an explicit tail estimate.  In the local Taylor computation the terms $m\le40$ are evaluated with directed intervals and the omitted terms are absorbed into a symmetric padding smaller than $10^{-500}$.  In the global theta-tail estimates the sign-alternating derivative polynomials are first reduced to polynomials with nonnegative coefficients after the shifts $m^2=4+u$ and $y=y_0+v$; monotonicity then reduces each infinite tail to an endpoint estimate.

\subsection{Local Taylor certificate}
The script \path{certificate_small_t_022.py} constructs the coefficients in \eqref{eq:scoef} through the order needed for $P_{90}^{\Ccal}$.  The interval $[0,0.022]$ is divided into $220$ subintervals of width $10^{-4}$, and interval Horner evaluation gives the uniform upper bound \eqref{eq:P90new}.  The analytic Cauchy estimate \eqref{eq:Cauchytail} then supplies the independent remainder bound \eqref{eq:rem022}.  Thus the finite calculation proves only the sign of a polynomial on a compact interval; the passage from the polynomial to the exact analytic function is entirely explicit in the text.

\subsection{Direct-theta continuation}
The script \path{certificate_first_term_5932.py} proves the rational Bernstein bounds \eqref{eq:firststrong}--\eqref{eq:firstmid}.  The scripts \path{certificate_tail_L.py} and \path{certificate_tail_5932.py} establish the derivative envelopes and their propagation through \eqref{eq:DGexact}.  The inequalities in Section~4.2 are displayed precisely so that the main perturbation step can be checked without executing code.  The remaining scripts \path{certificate_tail.py} and \path{certificate_large_r.py} cover the middle and super-exponentially small large-$L$ tails.

\subsection{All-contact cone certificate}
For Proof~II, \path{certificate_cone_y_080.py} works on
\[
 5.68\le y\le7.00,
 \qquad 0\le\delta\le1,
\]
which contains $0.5932\le L\le0.80$.  The $y$-interval is divided into boxes of width $0.002$ and the range $0.05\le\delta\le1$ into boxes of width $0.001$; $0\le\delta\le0.05$ is certified negative separately.  For each box the clock-free quantities $C,\alpha,\beta,\gamma$ are enclosed from the one-term rational functions plus the certified theta-tail envelopes.  A box is retained if and only if the interval extension of $\Fcal$ contains zero.  On every retained box, including boxes meeting either positive cone branch, the interval extension of $\Gamma_C-\gamma$ has positive lower endpoint.  The smallest lower endpoint is
\[
 0.1895705490382078775\ldots,
\]
which proves the all-contact implication used in \eqref{eq:normal}.  No numerical root selection or fold-uniqueness hypothesis enters this certificate.

\subsection{Post-barrier certificates}
At the safer anchor $L=0.80$, \path{certificate_no_root_080.py} subdivides $0\le\delta\le1$ into intervals of width $2\times10^{-4}$ and proves
\[
 \Fcal(\delta;0.80)<-0.0448287020\ldots.
\]
For the finite monotonicity range, \path{certificate_monotone_finite_699.py} divides $6.99\le y\le20$ into intervals of width $0.005$.  On every interval it proves
\[
 \mathsf A>0,\qquad \mathsf B>0,\qquad \mathsf D>0,
 \qquad 27\mathsf A\mathsf D^2-4\mathsf B^3>0,
\]
with certified minimum lower enclosure $28.39158275\ldots$.  The large-$y$ continuation is not an interval sweep: \path{certificate_monotone_asymptotic.py} verifies the exact shifted-polynomial positivity underlying \eqref{eq:firstasymp}--\eqref{eq:ABDfirst}, after which the analytic estimates \eqref{eq:epstaily20}--\eqref{eq:ABDcoarse} close the proof for every $y\ge20$.

\subsection{Minimal independent verification checklist}
A verifier does not need to reproduce every diagnostic quantity printed by the scripts.  The proof depends only on the following finite statements.
\begin{enumerate}
\item The interval polynomial bound $P_{90}^{\Ccal}<-4750.1765$ on $[0,0.022]$, together with the analytic Cauchy remainder $<1136.645$.
\item The one-term inequalities in \eqref{eq:firststrong}--\eqref{eq:firstmid} and the four tail-derivative bounds in \eqref{eq:epsL}; the displayed propagation then yields \eqref{eq:DG022} without any further opaque computation.
\item On every box in $5.68\le y\le7.00$, $0\le\delta\le1$ for which the interval extension of $\Fcal$ contains zero, the interval extension of $\Gamma_C-\gamma$ has positive lower endpoint.  The reported global lower bound $0.1895705490\ldots$ is stronger than what the first-contact argument requires.
\item At $L=0.80$, the interval extension of $\Fcal$ is negative on every $\delta$-box; the maximal certified upper endpoint is $-0.0448287020\ldots$.
\item On $6.99\le y\le20$, the interval extensions of $\mathsf A,\mathsf B,\mathsf D$ are positive and that of $27\mathsf A\mathsf D^2-4\mathsf B^3$ has lower endpoint $>28.39$.  Beyond $y=20$, only the exact shifted-polynomial checks entering \eqref{eq:firstasymp}--\eqref{eq:ABDfirst} are computational; the remainder of Lemma~\ref{lem:largepost} is analytic.
\end{enumerate}
These five checks are logically sufficient for the computer-assisted parts of the manuscript.  In particular, no floating-point root finder, graphical observation, or unverified asymptotic extrapolation is used in either proof.

\subsection{Supplementary files}
The proof bundle contains the scripts just listed, their recorded outputs, the shared interval helper modules, a one-command runner, and SHA-256 checksums.  It requires only Python and SymPy and makes no network calls.  All numerical constants in the manuscript are outputs of these scripts or deliberately weakened roundings of them.  An independent Arb/FLINT reimplementation would be a valuable external audit, but no such reimplementation is assumed in either proof.

\appendix
\section{Algebra behind the normalized cone}\label{app:conealgebra}
For completeness, we record the two eliminations that turn the original second-level concavity inequality into the scalar quartic and then into the cone-normal condition.  These are exact algebraic identities; no interval arithmetic enters this appendix.

\subsection{From the logarithmic curvature to the quartic}
Let $G(L)=\log\Phi(L/4)$, $P=G'$, and $P_j=G^{(j+1)}$ as in \eqref{eq:Pj}.  With $u=L^{-1}$ and $H=uP-P_1$, direct differentiation gives
\begin{equation}\label{eq:appHder}
 H'=-uH-P_2,
 \qquad
 H''=2u^2H+uP_2-P_3.
\end{equation}
The change of variables $t=L^2/16$ gives $\frac{\D}{\D t}=8u\frac{\D}{\D L}$.  Substitution of \eqref{eq:appHder} into the second derivative of the logarithm of the first Laguerre expression yields the equivalent inequality
\begin{equation}\label{eq:appCDred}
 6u^2-2H-\frac{P_3}{H}-\left(\frac{P_2}{H}\right)^2<0.
\end{equation}
This is the point at which all derivatives have been eliminated in favor of the autonomous hierarchy and the single explicit variable $u$.

Now use
\[
 H=-P_1\delta,
 \qquad
 u=(1-\delta)\frac{P_1}{P},
\]
which follows directly from \eqref{eq:delta}.  Multiply \eqref{eq:appCDred} by the positive factor
\[
 \frac{H^2P^2}{P_1^4}>0.
\]
Then insert the definitions
\[
 C=-\frac{P^2}{P_1},\qquad
 \alpha=\frac{PP_2}{P_1^2},\qquad
 \beta=\frac{P^2P_3}{P_1^3}
\]
gives, without approximation,
\[
 6\delta^2(1-\delta)^2-2C\delta^3+\beta\delta-\alpha^2<0,
\]
which is \eqref{eq:Fquartic}.  This calculation also explains why $\delta$ is the only coordinate retaining the explicit clock: $C,\alpha,\beta$ are formed solely from the autonomous derivatives $P_j$.

\subsection{The cone shear and the normal derivative}
Expanding the quartic and grouping the terms linear in $\beta$ gives
\[
 \Fcal
 =\delta\left[\beta+6\delta-(12+2C)\delta^2+6\delta^3\right]-\alpha^2.
\]
Thus the triangular change \eqref{eq:shear} gives the exact cone equation
\[
 \Fcal=\delta\widetilde\beta-\alpha^2.
\]
For fixed $C$ this is the standard rank-three quadratic cone $XY=Z^2$.  The transformation is used only as a local moving frame along the Jacobi orbit; because $C=C(L)$, no fixed ambient cone is being asserted globally.

To obtain the first-contact derivative, differentiate $\Fcal$ using the hierarchy \eqref{eq:hierarchy}.  At a contact point $\Fcal=0$ with $\delta>0$, put $q=\alpha/\delta$.  The contact equation eliminates $\beta$ in the form
\begin{equation}\label{eq:appbeta}
 \beta=q^2\delta-6\delta(1-\delta)^2+2C\delta^2.
\end{equation}
Substituting \eqref{eq:appbeta} into $\Fcal'/\omega$ and collecting powers of $\delta$ and $q$ gives
\[
 \frac{\Fcal'}{\omega}
 =\delta\,[\gamma-\Gamma_C(\delta,q)],
\]
with $\Gamma_C$ exactly as in \eqref{eq:GammaC}.  Therefore the all-contact certificate in Section~7.4 is a genuine normal-vector test: it proves that every point of the equality boundary that can be reached on the compact transition interval has strictly negative orbital derivative.  This is the only geometric fact about the cone needed in Proof~II.

\section*{Declaration on the use of artificial intelligence}
Generative-AI tools, principally OpenAI ChatGPT and Anthropic Claude, were used during the exploratory development of some algebraic reductions, for symbolic and numerical checking, assistance in developing verification code, and manuscript editing. All mathematical statements, proofs, computations, references, and accompanying code were reviewed by the authors, who take full responsibility for their correctness. All computer-assisted steps used in the proofs are supplied in reproducible form and do not rely on unverifiable AI output.


\begin{thebibliography}{99}

\bibitem{CD2000}
G.~Csordas and D.~K. Dimitrov,
\emph{Conjectures and theorems in the theory of entire functions},
Numer. Algorithms \textbf{25} (2000), no.~1--4, 109--122.
DOI: 10.1023/A:1016604906346.

\bibitem{Csordas2015}
G.~Csordas,
\emph{Fourier transforms of positive definite kernels and the Riemann $\xi$-function},
Comput. Methods Funct. Theory \textbf{15} (2015), no.~3, 373--391.
DOI: 10.1007/s40315-014-0105-8.

\bibitem{CV1988}
G.~Csordas and R.~S. Varga,
\emph{Moment inequalities and the Riemann hypothesis},
Constr. Approx. \textbf{4} (1988), no.~1, 175--198.
DOI: 10.1007/BF02075457.

\bibitem{CravenCsordas2002}
T.~Craven and G.~Csordas,
\emph{Iterated Laguerre and Tur\'an inequalities},
J. Inequal. Pure Appl. Math. \textbf{3} (2002), no.~3, Article~39, 14~pp.

\bibitem{CoffeyCsordas2013}
M.~W. Coffey and G.~Csordas,
\emph{On the log-concavity of a Jacobi theta function},
Math. Comp. \textbf{82} (2013), no.~284, 2265--2272.
DOI: 10.1090/S0025-5718-2013-02681-6.

\bibitem{DimitrovLucas2011}
D.~K. Dimitrov and F.~R. Lucas,
\emph{Higher order Tur\'an inequalities for the Riemann $\xi$-function},
Proc. Amer. Math. Soc. \textbf{139} (2011), no.~3, 1013--1022.
DOI: 10.1090/S0002-9939-2010-10515-4.

\bibitem{DLMF20}
NIST Digital Library of Mathematical Functions,
\emph{Chapter 20: Theta Functions}, \S20.7(viii), ``Transformations of Lattice Parameter,''
National Institute of Standards and Technology.

\bibitem{ConwaySloane1999}
J.~H. Conway and N.~J.~A. Sloane,
\emph{Sphere Packings, Lattices and Groups},
3rd ed., Grundlehren der Mathematischen Wissenschaften, vol.~290,
Springer-Verlag, New York, 1999.
DOI: 10.1007/978-1-4757-6568-7.

\bibitem{Romik2020}
D.~Romik,
\emph{The Taylor coefficients of the Jacobi theta constant $\theta_3$},
Ramanujan J. \textbf{52} (2020), no.~2, 275--290.
DOI: 10.1007/s11139-018-0109-5.

\bibitem{DyachenkoKarp2022}
A.~Dyachenko and D.~Karp,
\emph{Integral representations of ratios of the Gauss hypergeometric functions with parameters shifted by integers},
Mathematics \textbf{10} (2022), no.~20, Article~3903, 26~pp.
DOI: 10.3390/math10203903.

\bibitem{Faulhuber2021}
M.~Faulhuber,
\emph{Extremal determinants of Laplace--Beltrami operators for rectangular tori},
Math. Z. \textbf{297} (2021), 175--195.
DOI: 10.1007/s00209-020-02507-7.

\end{thebibliography}
\end{document}